\theoremstyle{plain}
\newtheorem{theor}{Theorem}
\theoremstyle{plain}
\newtheorem{prop}{Proposition}
\theoremstyle{plain}
\newtheorem{lemma}{Lemma}
\theoremstyle{plain}
\newtheorem{cor}{Corollary}
\theoremstyle{remark}
\newtheorem{rem}{Remark}
\theoremstyle{remark}
\newtheorem{ex}{Example}
\begin{document}


\title{On subspaces of Orlicz spaces spanned by independent copies of a mean zero function}


\author{S.~V.~Astashkin}
\address{Samara National Research University, Samara, Russia; Lomonosov Moscow State University, Moscow, Russia; Moscow Center of Fundamental and applied Mathematics, Moscow, Russia; Bahcesehir University, Istanbul, Turkey}
\email{astash@ssau.ru}
\thanks{$^\dagger$\,This research was performed at Lomonosov Moscow State University and was supported by the Russian Science Foundation, project (no. 23-71-30001).}


\maketitle


{\bf Key words}: independent functions, symmetric space, strongly embedded subspace, Orlicz function, Orlicz space, Matuszewska-Orlicz indices.


{\bf Abstract}: We study subspaces of Orlicz spaces $L_M$ spanned by independent copies $f_k$, $k=1,2,\dots$, of a function $f\in L_M$, $\int_0^1 f(t)\,dt=0$. Any such a subspace $H$ is isomorphic to some Orlicz sequence space $\ell_\psi$. In terms of dilations of the function $f$, a description of strongly embedded subspaces of this type is obtained, and conditions, guaranteeing that the unit ball of such a subspace consists of functions with equicontinuous norms in $L_M$, are found.  In particular, we prove that there is a wide class of Orlicz spaces $L_M$ (containing $L^p$-spaces, $1\le p< 2$), for which each of the above properties of $H$ holds  if and only if the Matuszewska-Orlicz indices of the functions $M$ and $\psi$ satisfy the inequality: $\alpha_\psi^0>\beta_M^\infty$.

\def\hj{{\mathbb R}}

\def\fg{{\mathbb N}}

\vskip 0.4cm

\label{Intro}

According to the classical Khintchine inequality (see, for example, \cite[Theorem~V.8.4]{Z1}), for each $0<p<\infty$, there exist constants $A_p>0$ and $B_p>0$ such that for any sequence of real numbers $(c_k)_{k=1}^\infty$ we have 
\begin{equation}\label{ineq 1}
A_p \|(c_k)\|_{\ell^2}\leq \Big\|\sum_{k=1}^\infty c_k r_k\Big\|_{L^p[0, 1]} 
\leq B_p \|(c_k)\|_{\ell^2},
\end{equation}
where $r_k$ are the Rademacher functions, $r_k(t) = {\rm sign}
(\sin 2^k \pi t)$, $k \in {\Bbb N}$, $t \in [0,1]$, and $\|(c_k)\|_{\ell^2}:=\left(\sum_{k=1}^\infty c_k^2 \right)^{1/2}$.
This means that, for every $0<p<\infty$, the sequence $\{r_k\}_{k=1}^\infty$ is equivalent in $L^p$ to the canonical basis in the space $\ell^2$.
This example demonstrates a certain general phenomenon, which is reflected in the following concept. A closed linear subspace $H$ of the space $L^p=L^p[0,1],$ $1\le p<\infty,$ is called a {\it $\Lambda(p)$-space} if convergence in $L^p$-norm is equivalent  on $H$ to convergence in measure, or equivalently: for each (or some) $q\in (0,p)$ there is a constant $C_q>0$ such that 
\begin{equation}\label{ineq 1 extra}
 \|f\|_{L^p}\le C_q\|f\|_{L^q}\;\;\mbox{for all}\;\;f\in H
\end{equation}
(see \cite[Proposition~6.4.5]{AK}). Consequently, the inequality \eqref{ineq 1} shows that the span $[r_k]$ in $L^p$ is a $\Lambda(p)$-space for any $1\le p<\infty.$

The starting point for introducing the notion of a $\Lambda(p)$-space was the classical Rudin's paper \cite{Rud}, devoted to Fourier analysis on the circle $[0,2\pi)$, in which the following related concept was studied. Let $0<p<\infty.$ A set $E\subset \mathbb{Z}$ is called a {\it $\Lambda(p)$-set} if for some $0<q<p$ there is a constant $C_q>0 $ such that inequality \eqref{ineq 1 extra} holds for every trigonometric polynomial $f$ with spectrum
(i.e., the support of its Fourier transform) contained in $E$. As is easy to see, this is equivalent to the fact that the subspace $L_E$ spanned by the set of exponentials $\{e^{2\pi int},$ $n\in E\}$ is a $\Lambda(p)$-space.
In particular, in \cite{Rud} for all integers $n>1$, Rudin constructed
$\Lambda(2n)$-sets that are not $\Lambda(q)$-sets for any $q>2n.$ In 1989, J. Bourgain strengthened this result by extending Rudin's theorem to all $p>2$ \cite{bour}. In view of the well-known 
Vall\'{e}e Poussin criterion (see Lemma \ref{lemma 3} below), this implies, for each $p>2$, the existence of a $\Lambda(p)$-set $E$ such that  functions of the unit ball of the subspace $L_E$ fail to have equicontinuous norms in $L^p$ (for all definitions see \S\,\ref{prel}). 

On the "other side"\;of $L^2$, as it often happens, the picture turned out to be completely different.
Even earlier, in 1974, Bachelis and Ebenshtein showed in \cite{BachEb}  that in the case when $p\in (1,2)$ every $\Lambda(p)$-set is a $\ Lambda(q)$-set for some $q>p$\footnote{For a detailed exposition of the theory of $\Lambda(p)$-sets, see the survey \cite{Bourg-survey}.}. Moreover, in the same direction, in \cite[Theorem~13]{Ros} Rosenthal proved that for every $1<p<2$ a (closed linear) subspace $H$ of the space $L^p$ is a $\Lambda(p)$-space if and only if functions of the unit ball of $H$ have equicontinuous norms in $L^p$.

A recent author's paper \cite{A-22} deals with extending Rosenthal's theorem to the class of Orlicz function spaces $L_M$. Generalizing the concept of a $\Lambda(p)$-space (see \cite[Definition~6.4.4]{AK}), a subspace $H$ of an Orlicz space $L_M$ (or a symmetric space $X$) on $[0 ,1]$ will be called {\it strongly embedded} in $L_M$ (resp. in $X$) if convergence in the $L_M$-norm (resp. in the $X$-norm) on $H$ is equivalent to convergence in measure. The condition $1<p<2$ from Rosenthal's theorem in this more general setting turns into the inequality $1<\alpha_M^\infty\le \beta_M^\infty<2$ for the Matuszewska-Orlicz indices of the function $M$.
As shown in \cite{A-22}, unlike $L^p$, the last condition does not guarantee that an analogue of Rosenthal's theorem is valid in $L_M$. In particular, the norms of functions of the unit ball  of any subspace, strongly embedded in the space $L_M$ and isomorphic to some Orlicz sequence space, are equicontinuous in $L_M$ if and only if the {\it function $t^{-1/\beta_M^ \infty}$ does not belong to $L_M$} \cite[Theorem~3]{A-22}. Thus, if this condition is not fulfilled, an analogue of Rosenthal's theorem does not hold even for this special class of subspaces of Orlicz spaces.

The family of subspaces of a space $L_M$, isomorphic to Orlicz sequence spaces, includes, in particular, subspaces spanned in $L_M$ by independent copies of mean zero functions from this space (see further \S\,\ref{aux1}). The present paper is devoted mainly to a detailed study of subspaces of this type.

Note that the research related to the class of subspaces of $L^p$-spaces with a symmetric basis, spanned by sequences $\{f_k\}_{k=1}^\infty$ of independent functions, was started quite for a long time. Interest in this topic has increased after 1958, when  Kadec \cite{Kad} "put an end"\;to the solution of the well-known Banach problem proving that, for every pair of numbers $p$ and $q$ such that $1\leq p<q<2,$ a sequence $\{\xi^{(q)}_k\}_{k=1}^\infty$ of independent copies of a $q$-stable random variable $\xi^{(q)}$  spans  a subspace in $L^p$ isomorphic to $\ell^q$.
Following this, in 1969, Bretagnolle and Dacunha-Castelle showed (see \cite{BD, BD2, B3}) that for any function $f\in L^p$ such that $\int_0^1 f(t)\,dt=0$, a sequence $\{f_k\}_{k=1}^\infty$ of independent copies of $f$ is equivalent in $L^p$, $1\le p<2$, to the canonical basis in some Orlicz sequence space $\ell_\psi$, where the function $\psi$ is $p$-convex and $2$-concave. Later on, somewhat closed results were obtained by Braverman (see \cite[Corollary~2.1]{Br2} and \cite{Br3}).
In the opposite direction, as was shown in \cite{BD2}, if $\psi$ is a $p$-convex and $2$-concave Orlicz function such that $\lim_{t\to 0}\psi(t)t^{-p}=0$, then a sequence of independent copies of some mean zero function $f\in L^p$ is equivalent in $L^p$ to the canonical basis in $\ell_\psi$.

This research was then continued in the paper \cite{AS-14} due to Astashkin and Sukochev, where, among other things, the existence of direct connections between an Orlicz function $\psi$ and the distribution of a function $f\in L^p$, whose independent copies span in $L^p$ a subspace isomorphic to the space $\ell_\psi$, has been  revealed. This led to a natural question about whether the distribution of such a mean zero function $f\in L^p$ is uniquely determined (up to equivalence for large values of the argument) by a given $\psi$?  A partial solution of this problem was obtained in subsequent  papers \cite{ASZ-15} and \cite{ASZ-22}. In particular, according to \cite[Theorem~1.1]{ASZ-15}, if an Orlicz function $\psi$ is situated sufficiently "far"\;from the "extreme"\;functions $t^p$ and $t^2$, $1\leq p<2$, such uniqueness exists, and the distribution of such a function $f$ is equivalent (for large values of the argument) to the distribution of the function ${1}/{\psi^{-1}}$. In \cite{ASZ-22} some of these results were extended to general
symmetric function spaces on $[0,1]$ satisfying certain conditions.

In this paper, the above facts are used in essential way. Other important ingredients in the proofs are a version of the famous Vall\'{e}e Poussin criterion, as well as the author's results obtained in the paper \cite{A-16}, which imply that an Orlicz space $L_M$ such that $1<\alpha_M^\infty\le \beta_M^\infty<2$ contains the function $1/\psi^{-1}$ provided that there is a strongly embedded subspace in $L_M$ isomorphic to the Orlicz sequence space $\ell_\psi$.

Let us describe briefly the content of the paper. In \S\,\ref{prel} and \S\,\ref{prel1a}, we give necessary preliminary information and some auxiliary results related to symmetric spaces, as well as to Orlicz functions and Orlicz spaces.

The main results are contained in \S\,\ref{main}. Thus, in \S\,\ref{main2}, by using terms of dilations of a function $f\in L_M$, $\int_0^1 f(t)\,dt=0$, the conditions, under which the subspace $[f_k]$ spanned by independent copies of $f$ is strongly embedded in $L_M$,  are found (see Proposition \ref{prop 1a}). Here, we also obtain the conditions, ensuring that the unit ball of the subspace $[f_k]$ of the above type consists of functions having equicontinuous norms in $L_M$ (Proposition \ref{prop 2a}). In \S\,\ref{main3},  these results are applied when considering the question if the fact that the subspace $[f_k]$ is strongly embedded in $L_M$ implies the equicontinuity in $L_M$ of the norms of functions of the unit ball of this subspace (see Theorem \ref {cor: new11}).

The most complete results are obtained in \S\,\ref{main1}, when we have  $t^{-1/\beta_M^\infty}\not\in L_M$ (in particular, this   condition holds for $L^p$). Namely, if $1<\alpha_M^\infty\le \beta_M^\infty<2$ and the subspace $[f_k]$ is isomorphic to the  Orlicz sequence space $\ell_\psi$, then the above properties of this subspace can be characterized by using the Matuszewska-Orlicz  indices of the functions $M$ and $\psi$ as follows: the unit ball of the subspace $[f_k]$ consists of functions having equicontinuous norms in $L_M$ $\Longleftrightarrow$ the subspace $[f_k]$ is strongly embedded in $L_M$ $\Longleftrightarrow $ $\alpha_\psi^0>\beta_M^\infty$ (see Theorem \ref{new theorem}).

In the final part of the paper, \S\,\ref{main4}, it is shown that the unit ball of any subspace of the $L^2$-space spanned by mean zero identically distributed independent functions consists of functions with equicontinuous norms in $L ^2$ (see Theorem \ref{prop 5}).

Some of the results of this paper were announced in the note \cite{A-23DAN}.

\section{Preliminaries.}
\label{prel}

If $F_1$ and $F_2$ are two non-negative functions (quasinorms) defined on a set $T$, then the notation $F_1\preceq F_2$ means the existence of a constant $C>0$ such that $F_1(t)\leq CF_2( t)$ for all $t\in T$. If simultaneously $F_1\preceq F_2$ and $F_2\preceq F_1$, the quantities $F_1$ and $F_2$ will be called {\it equivalent} on $T$ (we write: $F_1\asymp F_2$). In the case when $T=(0,\infty)$, we will also say about the equivalence {\it for large (resp. small) values of the argument}. This means that the relation $F_1\asymp F_2$ holds for all $t\ge t_0$ (resp. $0<t\le t_0$), where $t_0$ is sufficiently large (resp. sufficiently small).

The fact that Banach spaces $X$ and $Y$ are linearly and continuously isomorphic will be denoted as $X\approx Y$. A subspace of a Banach space always will be assumed to be linear and closed. Finally, in what follows $C$, $C_1,\dots$ are positive constants, the value of which can change from case to case.

\subsection{Symmetric spaces.\\}
\label{prel1}

For a detailed exposition of the theory of symmetric spaces, see the monographs \cite{KPS,LT,BSh}.

A Banach space $X$ of real-valued functions measurable on the space $(I,m)$, where $I=[0,1]$ or $(0,\infty)$ and $m$ is the Lebesgue measure, is called {\it symmetric} (or {\it rearrangement invariant}), if from the conditions $y\in X$ and $x^*(t)\le y^*(t)$ almost everywhere (a.e.) on $I$ it follows: $x\in X$ and ${\|x\|}_X \le {\|y\|}_X.$ Here and throughout, $x^*(t)$ denotes right-continuous nonincreasing {\it rearrangement} of a function $|x(s)|$, given by:
$$
x^{*}(t):=\inf \{ \tau\ge 0:\,n_x(\tau)\le t \},\;\;0<t<m(I),$$
where
$$
n_x(\tau):=m\{s\in I:\,|x(s)|>\tau\},\;\;\tau>0.$$

In particular, every symmetric space $X$ is a Banach lattice of measurable functions, which means the following: if $x$ is measurable on $I$, $y \in X$ and $|x(t)|\le |y(t) |$ a.e. on $I$, then $x\in X$ and ${\|x\|}_X \le {\|y\|}_X.$
Moreover, according to the definition, if $x$ and $y$ are {\it equimeasurable functions}, i.e., $n_x(\tau)=n_y(\tau)$ for all $\tau>0$, and $y\in X$, then $x\in X$ and ${\|x\|}_X ={\ |y\|}_X.$ Note that every measurable function $x(t)$ is equimeasurable with its rearrangement $x^*(t)$.

For each symmetric space $X$ on $[0,1]$ (resp. on $(0,\infty)$) we have the continuous embeddings $L^\infty[0,1]{\subseteq} X{\subseteq } L^1[0,1]$ (resp. $(L^1\cap L^\infty)(0,\infty){\subseteq} X {\subseteq} (L^1+L^\infty) (0,\infty)).$
In what follows, it will be assumed that the normalization condition  $\|\chi_{[0,1]}\|_X=1$ is satisfied. In this case, the constant in each of the preceding embeddings is equal to $1$.

The {\it fundamental function} $\phi_X$ of a symmetric space $X$ is defined by the formula $\phi_X(t):=\|\chi_A\|_X$, where $\chi_A$ is the characteristic function of a measurable set $A \subset I$ such that $m(A)=t$. The function $\phi_X$ is {\it quasi-concave} (i.e., $\phi_X(0)=0$, $\phi_X$ does not decrease and $\phi_X(t)/t$ does not increase on $I$).

Let $X$ be a symmetric space on $[0,1]$. For any $\tau>0$ the {\it dilation operator} ${\sigma}_\tau x(t):=x(t/\tau)\chi_{(0,\min\{1,\tau\})}(t)$, $0\le t\le 1$, is bounded in $X$ and $\|{\sigma}_\tau\|_{X\to X}\le \max(1,\tau)$ (see, e.g., \cite[Theorem II.4.4]{KPS}). To avoid any confusion, we will not introduce a special notation for the dilation operator $x(t)\mapsto x(t/\tau)$, $\tau>0$, defined on the set of functions $x(t)$ measurable on $(0.\infty)$. The norm of this operator in any symmetric space $X$ on the semi-axis satisfies exactly the same estimate as the norm of the above operator ${\sigma}_\tau$.

If $X$ is a symmetric space on $[0,1]$, then the {\it associated} space $X'$ consists of all measurable functions $y$, for which
$$
\|y\|_{X'}:=\sup\Big\{\int_{0}^1{x(t)y(t)\,dt}:\;\;
\|x\|_{X}\,\leq{1}\Big\}<\infty.
$$
$X'$ is also a symmetric space; it is isometrically embedded in the dual space $X^*$, and $X'=X^*$ if and only if $X$ is separable. A symmetric space $X$ is called {\it maximal} if, from the conditions $x_n\in X$, $n=1,2,\dots,$ $\sup_{n=1,2,\dots}\|x_n\|_X<\infty$ and $x_n\to{x}$ a.e., it follows that $x\in X$ and $||x||_X\le \liminf_{n\to\infty}{||x_n||_X}.$ The space $X$ is maximal if and only if the canonical embedding of $X$ in its second associated $X''$ is an isometric surjection.

In a similar way, one can also define symmetric sequence spaces (see, for instance, \cite[\S\,II.8]{KPS}). In particular, if $X$ is a symmetric sequence space, then the {\it fundamental function} of $X$ is defined by the formula $\phi_{X}(n):=\|\sum_{k=1}^n e_k\| _{X}$, $n=1,2,\dots$. In what follows, $e_k$ are canonical unit vectors in sequence spaces, i.e., $e_k=(e_k^i)_{i=1}^\infty$, $e_k^i=0$, $i\ne k$, and $e_k^k=1$, $k,i=1,2,\dots$.

The family of symmetric spaces includes many classical spaces that play an important role in analysis, in particular, $L^p$-spaces, Orlicz, Lorentz, Marcinkiewicz spaces and many others.
The next part of this section contains some used further preliminaries from the theory of Orlicz spaces, which are the main subject of the study in this paper.

\subsection{Orlicz functions and Orlicz spaces.\\}
\label{prel2}

Orlicz spaces are the most natural and important generalization of $L^p$-spaces. A detailed exposition of their properties can be found in the monographs \cite{KR,RR,Mal}.

Let $M$ be an Orlicz function, i.e., an increasing, convex, continuous function on the semi-axis $[0, \infty)$ such that $M(0) = 0$. Without loss of generality, we assume throughout the paper that $M(1) = 1$. The {\it Orlicz space} $L_{M}:=L_M(I)$ consists of all functions $x(t)$ measurable on $I$, for which the Luxemburg norm 
$$
\| x \|_{L_{M}}: = \inf \left\{\lambda > 0 \colon \int_I M\Big(\frac{|x(t)|}{\lambda}\Big) \, dt \leq 1 \right\}
$$
is finite. In particular, if $M(u)=u^p$, $1\le p<\infty$, we obtain the space $L^p$ with the usual norm.

Note that the definition of the space $L_M[0,1]$ depends (up to equivalence of norms) only on the behaviour of the function $M(u)$ for large values of $u$. The fundamental function of this space can be calculated by the formula $\phi_{L_M}(u)=1/M^{-1}(1/u)$, $0<u\le 1$, where $M^{-1}$ is the inverse function for $M$.

If $M$ is an Orlicz function, then the {\it complementary} (or {\it Yang conjugate}) function $\tilde{M}$ for $M$ is defined as follows:
$$
\tilde{M}(u):=\sup_{t>0}(ut-M(t)),\;\;u>0.$$
As is easy to see, $\tilde{M}$ is also an Orlicz function, and the  complementary function for $\tilde{M}$ is $M$.

Every Orlicz space $L_M(I)$ is maximal; $L_M[0,1]$ (resp. $L_M(0,\infty)$) is separable if and only if the function $M$ satisfies the so-called {\it $\Delta_2^\infty$-condition} ($M \in \Delta_2^\infty$) (resp. {\it $\Delta_2$-condition} ($M\in \Delta_2$) ), i.e.,
$$
\sup_{u\ge 1}{M(2u)}/{M(u)}<\infty\;\;\mbox{(resp.}\;\sup_{u>0}{M(2u)}/{M(u)}<\infty).$$ 
In this case, $L_M(I)^*=L_M(I)'=L_{\tilde{M}}(I)$.

An important characteristic of an Orlicz space $L_M[0,1]$ are {\it Matuszewska-Orlicz indices at infinity} $\alpha_{M}^{\infty}$ and $\beta_{M}^{\infty}$, defined by
$$
\alpha_{M}^{\infty}: = \sup \big\{ p : \sup_{t, s \geq 1} \frac{M(t)s^{p}}{M(ts)} < \infty \big\}, \ \ \ \beta_{M}^{\infty}: = \inf \big\{ p : \inf_{t, s \geq 1} \frac{M(t)s^{p}}{M(ts)} > 0 \big\}
$$
(see \cite{LTIII} or \cite[Proposition~5.3]{KamRy}). It can be easily checked that $1 \leq \alpha_{M}^{\infty} \leq \beta_{M}^{\infty} \leq \infty$.
Moreover, $M\in \Delta_2^\infty$ (resp. $\tilde{M}\in \Delta_2^\infty$) if and only if $\beta_{M}^{\infty} <\infty$ (resp. $\alpha_{M}^{\infty}>1$).

The Matuszewska-Orlicz indices are being a special case of the so-called Boyd indices, which can be defined for any symmetric space on $[0,1]$ or $(0,\infty)$ (see, e.g., \cite[Definition~ 2.b.1]{LT} or \cite[\S\,II.4, p.~134]{KPS}).

\vskip0.2cm

Similarly, one can define an {\it Orlicz sequence space}. Namely, if $\psi$ is an Orlicz function, then the space $\ell_{\psi}$ consists of all sequences $a=(a_{k})_{k=1}^{\infty}$ such that
$$
\| a\|_{\ell_{\psi}} := \inf\left\{\lambda>0: \sum_{k=1}^{\infty} \psi \Big( \frac{|a_{k}|}{\lambda} \Big)\leq 1\right\}<\infty.
$$
If $\psi(u)=u^p$, $p\ge 1$, we have $\ell_\psi=\ell^p$ isometrically. 

The fundamental function of an Orlicz space $\ell_{\psi}$ may be calculated by using the formula 
\begin{equation}
\label{fund}
\phi_{{\ell_\psi}}(n)=\frac{1}{\psi^{-1}(1/n)},\;\;n=1,2,\dots
\end{equation}

A space $\ell_{\psi}$ is separable if and only if $\psi$ satisfies the $\Delta_{2}^{0}$-condition ($\psi\in \Delta_{2}^{0}$), that is,
$$
\sup_{0<u\le 1}{\psi(2u)}/{\psi(u)}<\infty.$$
In this case $\ell_{\psi}^*=\ell_{\psi}'=\ell_{\tilde{\psi}}$, where  $\tilde{\psi}$ is the complementary function for $ \psi$.

As is easy to check (see also \cite[Proposition~4.a.2]{LT77}), the unit vectors $e_n$, $n=1,2,\dots$, form a symmetric basis in any Orlicz sequence space $\ell_{\psi}$ if $\psi\in \Delta_{2}^{0}$. Recall that a basis $\{x_n\}_{n=1}^\infty$ of a Banach space $X$ is called {\it symmetric} if there exists a constant $C>0$ such that for an arbitrary permutation $\pi$ of the set of positive integers and any $a_n\in\mathbb{R}$ the following inequality holds:
$$
C^{-1}\Big\|\sum_{n=1}^{\infty}a_nx_n\Big\|_X\le \Big\|\sum_{n=1}^{\infty}a_nx_{\pi(n)}\Big\|_X\le C\Big\|\sum_{n=1}^{\infty}a_nx_n\Big\|_X.$$

The definition of an Orlicz sequence space $\ell_{\psi}$ depends (up to equivalence of norms) only on the behaviour of the function $\psi$ for small values of the argument. More precisely, if $\varphi,\psi \in \Delta_{2}^{0}$, then the following conditions are equivalent: (1) $\ell_{\psi}=\ell_{\varphi}$ (with equivalence of norms); 2) the canonical vector bases in the spaces $\ell_{\psi}$ and $\ell_{\varphi}$ are equivalent; 3) the functions $\psi$ and $\varphi$ are equivalent for small values of the argument
(see \cite[Proposition~4.a.5]{LT77} or \cite[Theorem~3.4]{Mal}). In the case when $\psi$ is a {\it degenerate} Orlicz function, i.e., $\psi(u)=0$ for some $u> 0$, we obtain that $\ell_{\psi}=\ell_\infty$ (with equivalence of norms).

Let $\psi$ be an Orlicz function, $\psi\in \Delta_{2}^{0}$, $A>0$. We define the following subsets of the space $C[0,1]$:
$$
E_{\psi, A}^{0} = \overline{\big\{ {\psi(st)}/{\psi(s) \ : 0<s<A \big\}}}, 
\ \ C_{\psi, A}^{0} = \overline{conv E_{\psi, A}^{0} }, 
$$
where the closure is taken in the $C[0,1]$-norm, and $conv F$ denotes the convex hull of a set $F\subset C[0,1]$. All these sets are non-empty compact subsets of the space $C[0,1]$ \cite[Lemma~4.a.6]{LT77}. According to the theorem due to Lindenstrauss and Tsafriri (see, e.g., \cite[Theorem~4.a.8]{LT77}), an Orlicz space $\ell_\varphi$ is isomorphic to some subspace of the space ${\ell_\psi}$ if and only if $\varphi\in C_{\psi,1}^{0}$.

For any Orlicz function $\psi$ we define the {\it Matuszewska-Orlicz indices at zero} $\alpha_{\psi}^{0}$ and $\beta_{\psi}^{0}$ by
$$
\alpha_{\psi}^{0}: = \sup \big\{ p : \sup_{0<t, s \leq 1} \frac{\psi(st)}{s^{p}\psi(t)} < \infty \big\}, \ \ \ \beta_{\psi}^{0}: = \inf \big\{ p : \inf_{0<t, s \leq 1} \frac{\psi(st)}{s^{p}\psi(t)} > 0 \big\}
$$
As for the Matuszewska-Orlicz indices at infinity, the following inequalities hold: $1 \leq \alpha_{\psi}^{\infty} \leq \beta_{\psi}^{\infty} \leq \infty$ (see, e.g., \cite[Chapter~4]{LT77}). Moreover, the space $\ell^p$ or $c_0$ if $p=\infty$ is isomorphic to some subspace of an Orlicz space $\ell_\psi$ if and only if $p\in [\alpha_{\psi }^{0},\beta_{\psi}^{0}]$ \cite[Theorem~4.a.9]{LT77}.

\vskip0.2cm




\section{Auxiliary results.}
\label{prel1a}

\subsection{Strongly embedded subspaces and sets of functions with equicontinuous norms.\\}
\label{prel3}

Let $X$ be a symmetric space on $[0,1]$. Recall (see \S\,\ref{Intro}) that a subspace $H\subset X$ is {\it strongly embedded} if convergence in the $X$-norm on $H$ is equivalent to convergence in measure.

The following result is known in one form or another (for the case of $L^p$-spaces see \cite[Proposition~6.4.5]{AK}). For the reader's convenience, we present here its proof.

\begin{lemma}\label{lemma 2dop}
Suppose $X$ is a symmetric space on $[0,1]$ such that $X\ne L^1$ and  $H$ is a subspace of $X$. If the norms of $X$ and $L^1$ are equivalent on $H$, then $H$ is strongly embedded in $X$.
\end{lemma}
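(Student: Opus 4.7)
The easy direction — convergence in $X$-norm implies convergence in measure — is immediate from the continuous embedding $X\subseteq L^1$ together with Chebyshev's inequality, so my task is the reverse: for $(f_n)\subset H$ with $f_n\to 0$ in measure, show $\|f_n\|_X\to 0$. By the hypothesized norm equivalence on $H$, this is the same as showing $\|f_n\|_{L^1}\to 0$.

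The main technical step I plan to extract from $X\ne L^1$ is a classical uniform integrability statement, which uses nothing about the subspace $H$: every $X$-bounded family of functions is uniformly $L^1$-equi-integrable. To obtain it, I would first argue that $X\ne L^1$ forces the quasi-concave fundamental function to satisfy $\phi_X(t)/t\to\infty$ as $t\to 0^+$. Indeed, if $\phi_X(t)\le Ct$ on $[0,1]$, then $\|\cdot\|_X\le C\|\cdot\|_{L^1}$ on simple functions, and density of simple functions in $L^1$ together with completeness of $X$ would upgrade this to $L^1\subseteq X$, contradicting $X\ne L^1$. Combined with the standard associate-space identity $\phi_X(t)\phi_{X'}(t)\asymp t$, this yields $\phi_{X'}(t)\to 0$ as $t\to 0^+$. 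Hölder's inequality in symmetric spaces then gives, for any measurable $A\subseteq[0,1]$ and any $f$,
\begin{equation*}
\int_A |f(t)|\,dt \;\le\; \|f\|_X\,\|\chi_A\|_{X'} \;=\; \|f\|_X\,\phi_{X'}(m(A)),
\end{equation*}
which is exactly uniform $L^1$-equi-integrability on any $X$-ball.

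To close the argument I would reason by contradiction. Suppose $\|f_n\|_X\not\to 0$ and, after passing to a subsequence, $\|f_n\|_X\ge\delta>0$. Set $g_n:=f_n/\max(1,\|f_n\|_X)\in H$, so that $\|g_n\|_X\le 1$ and $|g_n|\le|f_n|$; in particular $g_n\to 0$ in measure. By the preceding equi-integrability fact together with Vitali's convergence theorem, $\|g_n\|_{L^1}\to 0$, and the norm equivalence on $H$ then forces $\|g_n\|_X\to 0$. However $\|g_n\|_X=\|f_n\|_X/\max(1,\|f_n\|_X)\ge\min(\|f_n\|_X,1)\ge\min(\delta,1)>0$, a contradiction.

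The one genuinely delicate step is translating the qualitative hypothesis $X\ne L^1$ into the quantitative conclusion $\phi_{X'}(0^+)=0$; once that is in hand, duality, Hölder, Vitali, and the assumed norm equivalence assemble routinely.
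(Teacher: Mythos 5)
Your argument is correct and is essentially the paper's own proof: both hinge on the H\"older estimate $\int_A|f|\,dt\le\|f\|_X\,\phi_{X'}(m(A))$ together with the observation that $X\ne L^1$ forces $\phi_{X'}(u)\to 0$ as $u\to 0^+$, applied to a normalized sequence from $H$ tending to zero in measure. The only cosmetic difference is that you package the tail estimate as uniform equi-integrability plus Vitali's theorem, whereas the paper carries out the same two-piece splitting (at a level $A$, using dominated convergence for the bounded part) by hand.
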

\begin{proof}
Assuming the contrary, we find a sequence $\{x_n\}\subset X$ such that
$\{x_n\}$ converges to zero in measure, but $\|x_n\|_X\not\to 0$. Passing to a subsequence, we can take for granted that $\{x_n\}$ converges to zero a.e. on $[0,1]$ and $\|x_n\|_X=1$, $n=1,2,\dots$. Then, for any $A>0$
\begin{eqnarray}
\|x_n\|_{L^1} &=& \int_{\{|x_n|\ge A\}} |x_n(t)|\,dt+\int_{\{|x_n|<A\}} |x_n(t)|\,dt\nonumber\\ &\le& \|x_n\|_{X}\|\chi_{\{|x_n|\ge A\}}\|_{X'}+\int_{\{|x_n|<A\}} |x_n(t)|\,dt\nonumber\\ &=& \phi_{X'}(m\{|x_n|\ge A\})+\int_{\{|x_n|<A\}} |x_n(t)|\,dt,
\label{2dop}
\end{eqnarray}
where $X'$ is the associated space for $X$ and $\phi_{X'}$ is the fundamental function of $X'$ (see \S\,\ref{prel1}). From $X\ne L^1$ it follows $X'\ne L_\infty$, and, as one can easily check,   $\lim_{u\to 0+}\phi_{X'}(u)=0$.

Let $\delta>0$ be arbitrary. First, for all $n=1,2,\dots$ we have
$$
m\{|x_n|\ge A\}\le \frac{\|x_n\|_{L^1}}{A}\le \frac{\|x_n\|_X}{A}=
\frac{1}{A},$$
and, consequently, there is $A_0>0$ such that
$$
\sup_{n=1,2,\dots}\phi_{X'}(m\{|x_n|\ge A_0\})\le\frac{\delta}{2}.$$
Second, by the Lebesgue Dominated Convergence theorem, there is a positive integer $n_0$ such that, for the above $A_0$ and for all $n\ge n_0$ it holds
$$
\int_{\{|x_n|<A_0\}} |x_n(t)|\,dt\le\frac{\delta}{2}.$$
As a result, applying the last two inequalities, as well as estimate \eqref{2dop} for $A=A_0$, we obtain that $\|x_n\|_{L^1}\le\delta$ for $n\ge n_0$. Since $\delta>0$ is arbitrary, it follows that the norms of the spaces $X$ and $L^1$ are not equivalent on $H$. Since this contradicts the assumption, the lemma is proven.
\end{proof}

Let $X$ be a symmetric space on $[0,1]$. The functions of a set $K\subset X$ are said to have {\it equicontinuous norms} in $X$ if
$$
\lim_{\delta\to 0}\sup_{m(E)<\delta}\sup_{x\in K}\|x\chi_{E}\|_X=0.$$
Suppose $H$ is a subspace of $X$. In what follows, by $B_H$ we denote the closed unit ball of $H$, i.e., $B_H:=\{x\in H:\,\|x\|_X\le 1\}$. 

\begin{lemma}\label{prop 1ab}
Let $X$ be a symmetric space on $[0,1]$ and $H$ be a subspace of $X$, $X\ne L^1$. If the $X$-norms of functions of the set $B_H$ are equicontinuous, then $H$ is strongly embedded in $X$.
\end{lemma}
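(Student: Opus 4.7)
The plan is to reduce the statement to Lemma \ref{lemma 2dop} by showing that the hypothesis of equicontinuous norms forces the $X$-norm and the $L^1$-norm to be equivalent on $H$. Since the normalization $\|\chi_{[0,1]}\|_X=1$ yields the continuous embedding $X\subseteq L^1$ with constant $1$, the inequality $\|x\|_{L^1}\le\|x\|_X$ is free, so what has to be produced is a constant $C>0$ such that $\|x\|_X\le C\|x\|_{L^1}$ for every $x\in H$.

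The device I would use is a truncation argument at a level proportional to $\|x\|_{L^1}$. First, invoking the equicontinuity of the $X$-norms on $B_H$ with $\varepsilon=\tfrac{1}{2}$, I fix $\delta>0$ such that $\|x\chi_E\|_X\le\tfrac{1}{2}$ whenever $x\in B_H$ and $m(E)<\delta$. Then, for any $x\in H$ with $\|x\|_X=1$ (so $x\in B_H$), I set $A:=2\|x\|_{L^1}/\delta$ and split
\[
x=x\chi_{\{|x|\ge A\}}+x\chi_{\{|x|<A\}}.
\]
Chebyshev's inequality gives $m\{|x|\ge A\}\le \|x\|_{L^1}/A=\delta/2<\delta$, so the first summand has $X$-norm at most $\tfrac{1}{2}$ by the equicontinuity. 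The second summand is pointwise bounded by $A$, hence its $X$-norm is at most $A\cdot\|\chi_{[0,1]}\|_X=A$. Therefore $1=\|x\|_X\le \tfrac{1}{2}+A$, i.e.\ $\|x\|_{L^1}=A\delta/2\ge\delta/4$. By homogeneity this yields $\|x\|_X\le(4/\delta)\|x\|_{L^1}$ for every $x\in H$.

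Having established equivalence of the two norms on $H$, Lemma \ref{lemma 2dop} (whose hypothesis $X\ne L^1$ is available here) immediately gives that $H$ is strongly embedded in $X$, completing the proof. There is no real obstacle: the only nonobvious move is choosing the truncation level $A$ proportional to $\|x\|_{L^1}/\delta$, which is precisely what allows the equicontinuous bound on the large-value part to interact with the pointwise bound on the small-value part. Everything else is routine manipulation.
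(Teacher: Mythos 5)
Your proof is correct. Both you and the paper follow the same overall strategy: show that the $X$-norm and the $L^1$-norm are equivalent on $H$ and then invoke Lemma \ref{lemma 2dop}. The difference lies in how the key inequality $\|x\|_X\preceq\|x\|_{L^1}$ is extracted from equicontinuity. You truncate at a level $A$ proportional to $\|x\|_{L^1}$, use Chebyshev's inequality to make the set $\{|x|\ge A\}$ small, kill that piece by equicontinuity, and bound the remaining piece by $A\|\chi_{[0,1]}\|_X$; this gives a clean, direct estimate (and, incidentally, mirrors the splitting used in the paper's own proof of Lemma \ref{lemma 2dop}). The paper instead phrases equicontinuity through the rearrangement, introduces the set $Q_x(\delta)=\{t:|x(t)|\ge\delta\|x\|_X\}$ (truncation at a level proportional to $\|x\|_X$), and argues by contradiction that $m(Q_x(\delta))\ge\delta$ for small $\delta$, from which $\|x\|_{L^1}\ge\delta^2\|x\|_X$ follows by integrating over $Q_x(\delta)$. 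Your route avoids the contradiction step and the rearrangement machinery, at the cost of nothing; the paper's route produces the slightly more quantitative intermediate fact that every normalized element of $H$ is bounded below by $\delta$ on a set of measure at least $\delta$, which is a statement of independent interest. Either way the reduction to Lemma \ref{lemma 2dop} is legitimate since $X\ne L^1$ is assumed.
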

\begin{proof}
First, by the assumption and the definition of the rearrangement $x^*$, for every $\varepsilon>0$ there exists $\delta>0$ such that for any function $x\in H$, $\|x\|_X\le 1$, we have
\begin{equation}\label{eq0a}
\|x^*\chi_{[0,\delta]}\|_X\le\varepsilon.
\end{equation}

Next, for an arbitrary measurable function $x(t)$ on $[0,1]$ and each $\delta>0$ we define the set
$$
Q_x(\delta):=\{t\in [0,1]:\,|x(t)|\ge\delta\|x\|_{X}\}.$$ 
Let us show that, if $\delta>0$ is sufficiently small, the following embedding holds:
\begin{equation}\label{eq11aa}
H\subset \{x\in L^1:\, m(Q_x(\delta))\ge \delta\}.
\end{equation}
Indeed, assuming that this is not the case, for each $\delta>0$ we find a function $x_\delta\in H$ such that $m(Q_{x_\delta}(\delta))<\delta$. Then, by the definition of the rearrangement $x_\delta^*$ and the equality $\|\chi_{[0,1]}\|_X=1$, we obtain
\begin{eqnarray*}
\|x_\delta^*\chi_{[0,\delta]}\|_X &\ge& \|x_\delta^*\chi_{[0,m(Q_{x_\delta}(\delta))]}\|_X\ge \|x_\delta\chi_{Q_{x_\delta}(\delta)}\|_X\\ &\ge& \|x_\delta\|_X- \|x_\delta\chi_{[0,1]\setminus Q_{x_\delta}(\delta)}\|_X\\ &\ge& \|x_\delta\|_X- \delta\|x_\delta\|_X
\|\chi_{[0,1]}\|_X\\ &=& (1-\delta)\|x_\delta\|_X.
\end{eqnarray*}
Since $\delta>0$ and $\varepsilon>0$ are arbitrary, the last inequality contradicts \eqref{eq0a} if we take for $x$ in this inequality the function $x_\delta/\|x_\delta\|_{L_M}$ for sufficiently small $\delta$. Thus, \eqref{eq11aa} is proved.

Now, let $\delta>0$ be such that \eqref{eq11aa} holds. Then for all $x\in H$ we have
$$
\|x\|_{L^1}\ge \int_{Q_x(\delta)} |x(t)|\,dt\ge \delta\|x\|_Xm(Q_x(\delta))\ge\delta^2 \|x\|_X.$$
Since the opposite inequality $\|x\|_{L^1}\le \|x\|_X$, $x\in X$, is fulfilled for any symmetric space $X$ (see \S\,\ref {prel1}), we conclude that the norms of $X$ and $L^1$ are equivalent on $H$. The required statement now follows from Lemma \ref{lemma 2dop}.
\end{proof}

\begin{rem}
Slightly modifying the proof, one can show that Lemma \ref{prop 1ab} is valid for $X=L^1$ as well. At the same time, the converse statement to this lemma, in general, does not hold (see Remark \ref{rem: non-equi-int} below or in more detail \cite[Example~2]{A-14}).
\end{rem}

\subsection{$P$-convex and $p$-concave Orlicz functions and Matuszewska-Orlicz indices.\\}
\label{aux2}
Let $1\leq p<\infty$. An Orlicz function $M$ is said to be $p$-{\it  convex} (resp. $p$-{\it concave}) if the mapping $t \mapsto M(t^{1/p})$ is convex (resp. concave). It is easy to check that an Orlicz space $L_M[0,1]$ is $p$-convex (resp. $p$-concave) if and only if the function $M$ is equivalent to some $p$-convex (resp. $p$-concave) Orlicz function for large values of the argument.
Similarly, an Orlicz sequence space $\ell_\psi$ is $p$-convex (resp. $p$-concave) if and only if the function $\psi$ is equivalent to some $p$-convex (resp. $p$-concave) Orlicz function for small values of the argument. Recall that a Banach lattice $X$ is called $p$-{\it convex} (resp. $p$-{\it concave}), where $1 \leq p \le\infty$, if there exists $C>0$ such that for any $n\in\mathbb{N}$ and arbitrary elements $x_{1}, x_{2}, \dots, x_{n}$ from $X$
$$
\Big\|\Big(\sum_{k=1}^n |x_k|^p\Big)^{1/p}\Big\|_X \leq C \Big(\sum_{k=1}^n \|x_k\|_X^p\Big)^{1/p}
$$
(resp.
$$
\Big(\sum_{k=1}^n\|x_k\|_X^p\Big)^{1/p} \leq C \Big\| \Big(\sum_{k=1}^n |x_k|^p\Big)^{1/p}\Big\|_X)
$$
(with the natural modification of expressions in the case of $p=\infty$). Obviously, every Banach lattice is $1$-convex and $\infty$-concave with constant $1$. Moreover, the space $L^p$ is $p$-convex and $p$-concave with constant $1$.

From the definition of Matuszewska-Orlicz indices and Lemma 20 from \cite{MSS} (see also \cite[Lemma~5]{AS-14}) we obtain the following characterization of the above properties.

\begin{lemma}
\label{Lemma 20}
Let $1\leq p<\infty$ and let $\psi$ be an Orlicz function on $[0,\infty)$. Then, we have

(i) $\psi$ is equivalent to a $p$-convex (resp. $p$-concave) function for small values of the argument $\Longleftrightarrow$ $\psi(st)\le C s^{p}\psi(t)$ (resp. $s^ p\psi(t)\le C \psi(st)$) for some $C>0$ and all $0<t,s\leq 1$;

(ii) $\psi$ is equivalent to a $(p+\varepsilon)$-convex (resp. $(p-\varepsilon)$-concave) function for small values of the argument and some $\varepsilon>0$ $\Longleftrightarrow$ $\alpha_\psi^0>p$ (resp. $\beta_\psi^0<p$).
\end{lemma}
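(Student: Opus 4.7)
The plan is to prove (i) first and then derive (ii) by combining (i) with the defining formulas for the Matuszewska--Orlicz indices at zero.

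For the forward direction of (i), I would use the elementary fact that a convex function $\tilde\varphi$ on $[0,\infty)$ with $\tilde\varphi(0)=0$ satisfies $\tilde\varphi(\sigma u)\le \sigma\tilde\varphi(u)$ for all $\sigma\in[0,1]$, and that a concave function vanishing at $0$ satisfies the reverse inequality. Applying this to $\tilde\varphi(u):=\varphi(u^{1/p})$ with $\sigma=s^p$ and $u=t^p$ yields $\varphi(st)\le s^p\varphi(t)$ (resp.\ $\ge s^p\varphi(t)$) wherever $\varphi\asymp\psi$, and a standard case analysis (exploiting that $\psi$ is bounded and bounded away from zero on the compact subinterval $[t_0,1]$) extends the resulting inequality from $(0,t_0]^2$ to all of $(0,1]^2$.

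For the reverse direction of (i), the hypothesis $\psi(st)\le Cs^p\psi(t)$ translates, via the substitution $u=st$, into the statement that $t\mapsto\psi(t)/t^p$ is \emph{quasi-nondecreasing} on $(0,1]$; symmetrically, $s^p\psi(t)\le C\psi(st)$ is equivalent to $\psi(t)/t^p$ being \emph{quasi-nonincreasing}. I would then replace this ratio by its truly monotone envelope (the pointwise supremum, resp.\ infimum, over smaller arguments), which is equivalent to $\psi(t)/t^p$, and pass to a genuine convex (resp.\ concave) function of the variable $u=t^p$; the quasi-monotonicity is exactly what is needed to control the equivalence between $\psi$ and the resulting regularization. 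At this point the cleanest move is to invoke Lemma~20 in \cite{MSS} (see also \cite[Lemma~5]{AS-14}), which packages precisely this construction.

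For (ii), I would reduce to (i) via the very definitions of $\alpha_\psi^0$ and $\beta_\psi^0$. If $\alpha_\psi^0>p$, choose $q$ with $p<q<\alpha_\psi^0$; the definition then yields $\psi(st)\le C s^q\psi(t)$ on $(0,1]^2$, and part (i) applied with $q$ in place of $p$ produces an equivalent $q$-convex function, which is in particular $(p+\varepsilon)$-convex for $\varepsilon=q-p>0$. Conversely, if $\psi$ is equivalent for small arguments to some $(p+\varepsilon)$-convex function, (i) supplies the inequality $\psi(st)\le C s^{p+\varepsilon}\psi(t)$, whence $\alpha_\psi^0\ge p+\varepsilon>p$. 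The concave statement is completely symmetric. The principal obstacle throughout is the reverse direction of (i)---turning quasi-monotonicity of $\psi(t)/t^p$ into a genuinely $p$-convex (resp.\ $p$-concave) equivalent function---which is exactly what the cited external lemma does for us.
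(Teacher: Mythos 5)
Your proposal is correct and follows essentially the same route as the paper, which gives no proof at all but simply derives the statement ``from the definition of Matuszewska--Orlicz indices and Lemma~20 from \cite{MSS} (see also \cite[Lemma~5]{AS-14})''. You flesh out exactly that reduction --- elementary convexity for the easy direction of (i), the cited regularization lemma for the hard direction, and the definitions of $\alpha_\psi^0,\beta_\psi^0$ for (ii) --- so there is nothing to add.
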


The proof of the following technical result is analogous to the proof of Lemma 6 given in \cite{A-22} and hence we skip it.

\begin{lemma}
\label{Lemma new1}
Let $\psi$ and $\varphi$ be Orlicz functions, $\varphi\in C_{\psi,1}^0$. Then, we have $\alpha_\psi^0\le \alpha_\varphi^0\le \beta_\varphi^0\le \beta_\psi^0$.
\end{lemma}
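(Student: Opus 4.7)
The middle inequality $\alpha_\varphi^0\le \beta_\varphi^0$ is the universal bound valid for any Orlicz function, so only the outer two inequalities require work. My plan is to show that the defining Matuszewska--Orlicz dilation inequality for $\psi$ is inherited, \emph{with the same constant}, by every function in $C_{\psi,1}^0$. I will spell out the argument for $\alpha_\psi^0\le \alpha_\varphi^0$; the other inequality is symmetric.

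Fix an arbitrary $p<\alpha_\psi^0$. By the definition of $\alpha_\psi^0$, there exists $C>0$ such that
\[
\psi(st)\le C\,s^{p}\psi(t)\qquad \text{for all } 0<s,t\le 1. \tag{$\ast$}
\]
For each parameter $a\in(0,1)$ set $\varphi_a(t):=\psi(at)/\psi(a)$. For $0<s,t\le 1$ we have $at\in(0,1]$, and applying $(\ast)$ with second argument $at$ gives
\[
\varphi_a(st)=\frac{\psi(s\cdot at)}{\psi(a)}\le C s^{p}\,\frac{\psi(at)}{\psi(a)}=C s^{p}\varphi_a(t).
\]
Thus every element of the generating set $\{\varphi_a:0<a<1\}$ of $E_{\psi,1}^{0}$ satisfies $(\ast)$ with the same constant $C$. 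The inequality $(\ast)$ trivially passes to (i) finite convex combinations, since $\sum_k\lambda_k f_k(st)\le C s^{p}\sum_k\lambda_k f_k(t)$, and to (ii) uniform limits on $[0,1]$, since for each fixed pair $(s,t)\in(0,1]^2$ both $f_n(st)\to f(st)$ and $f_n(t)\to f(t)$ pointwise. Hence every element of $C_{\psi,1}^{0}$, and in particular $\varphi$, satisfies $\varphi(st)\le C s^{p}\varphi(t)$ for $0<s,t\le 1$. By the very definition of $\alpha_\varphi^{0}$ this yields $p\le \alpha_\varphi^{0}$, and letting $p\uparrow\alpha_\psi^{0}$ gives $\alpha_\psi^0\le\alpha_\varphi^0$.

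For the inequality $\beta_\varphi^0\le \beta_\psi^0$ the argument is verbatim the same after reversing the direction of the inequality: for any $p>\beta_\psi^0$ there is $c>0$ with $\psi(st)\ge c\,s^{p}\psi(t)$ on $(0,1]^2$, this bound survives under parameter-substitution $t\mapsto at$, convex combinations, and uniform limits, and therefore is inherited by $\varphi$, giving $\beta_\varphi^0\le p$; letting $p\downarrow \beta_\psi^0$ finishes the proof.

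The only point requiring any care is the uniformity of the constants: one must verify that the constant $C$ produced from $(\ast)$ for $\psi$ does not worsen when we pass to a dilation $\varphi_a$, to a convex combination, or to a uniform limit. All three steps preserve the \emph{exact} constant, so no obstacle arises; this is why the argument is stated in the paper as essentially identical to that of \cite[Lemma~6]{A-22}.
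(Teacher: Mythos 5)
Your proof is correct: the dilation inequality characterizing $p<\alpha_\psi^0$ (resp.\ $p>\beta_\psi^0$) passes with the same constant to the generators $\psi(a\,\cdot)/\psi(a)$, to convex combinations, and to uniform limits, hence to all of $C_{\psi,1}^0$, which is exactly what is needed. The paper itself omits the argument, deferring to the analogous Lemma~6 of \cite{A-22}, and your write-up supplies precisely that standard reasoning, so there is nothing to flag.
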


The following lemma is being a direct consequence of the results proved in \cite{A-16}.

\begin{lemma}
\label{prop1}
Let $M$ be an Orlicz function, $1<\alpha_M^\infty\le \beta_M^\infty<2$. Suppose that $H$ is a strongly embedded subspace of the Orlicz space $L_M$ such that $H\approx\ell_\psi$, where $\beta_\psi^0<2$. Then, if $\varphi\in C_{\psi,1}^0$, then $1/\varphi^{-1}\in L_M$.

In particular, we get that $t^{-1/\alpha_\psi^0}\in L_M$. Therefore, if $t^{-1/\beta_M^\infty}\not\in L_M$, then $\alpha_\psi^0>\beta_M^\infty$.
\end{lemma}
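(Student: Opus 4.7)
The plan is to treat the three assertions in order, deducing the specific claims from the general one by choosing $\varphi$ carefully, and then obtaining the final index comparison by a monotonicity argument inside the Banach lattice $L_M$.

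For the first assertion $\varphi\in C_{\psi,1}^0 \Rightarrow 1/\varphi^{-1}\in L_M$, the substance is external: it is the content of [A-16]. The route I would spell out is to combine the Lindenstrauss--Tsafriri characterisation (Theorem~4.a.8 of [LT77], cited in \S\,\ref{prel2}) with the structural results of [A-16]. Since $\varphi\in C_{\psi,1}^0$, there is an isomorphic copy of $\ell_\varphi$ inside $\ell_\psi \approx H$, and this copy is then realised as a strongly embedded subspace of $L_M$ (strong embedding passes to subspaces). The hypotheses $1<\alpha_M^\infty\le\beta_M^\infty<2$ and $\beta_\varphi^0\le\beta_\psi^0<2$ (the latter via Lemma~\ref{Lemma new1}) are exactly those needed to apply the results of [A-16] concerning strongly embedded Orlicz sequence subspaces of $L_M$; via the identity $\phi_{\ell_\varphi}(n)=1/\varphi^{-1}(1/n)$ from \eqref{fund}, these force $1/\varphi^{-1}\in L_M$.

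For the ``in particular'' statement I specialise to $\varphi(t)=t^{\alpha_\psi^0}$. By the classification cited at the end of \S\,\ref{prel2} (Theorem~4.a.9 of [LT77]), the space $\ell^p\approx \ell_{t^p}$ embeds isomorphically in $\ell_\psi$ exactly when $p\in[\alpha_\psi^0,\beta_\psi^0]$; combining this with Theorem~4.a.8 of [LT77] yields $t^{\alpha_\psi^0}\in C_{\psi,1}^0$. (Note that $\beta_\psi^0<2$ guarantees $1\le \alpha_\psi^0<\infty$, so $t^{\alpha_\psi^0}$ is a non-degenerate Orlicz function.) The inverse of $t^{\alpha_\psi^0}$ being $s^{1/\alpha_\psi^0}$, the first part delivers $t^{-1/\alpha_\psi^0}\in L_M$.

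Finally, suppose for contradiction that $\alpha_\psi^0\le\beta_M^\infty$. Then $1/\alpha_\psi^0\ge 1/\beta_M^\infty$, so for every $t\in(0,1]$ one has $t^{-1/\alpha_\psi^0}\ge t^{-1/\beta_M^\infty}$. Since $L_M$ is a Banach lattice on $[0,1]$ and $t^{-1/\alpha_\psi^0}\in L_M$ by the previous step, the domination would force $t^{-1/\beta_M^\infty}\in L_M$, contradicting the assumption. Hence $\alpha_\psi^0>\beta_M^\infty$. The only genuine difficulty is the first step, which is purely a matter of invoking [A-16]; the remaining two reductions are routine.
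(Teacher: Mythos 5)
Your route is essentially the paper's: push $\ell_\varphi$ into $\ell_\psi\approx H$ via Theorem~4.a.8 of [LT77], note that the resulting subspace of $L_M$ is still strongly embedded, invoke [A-16], specialise to $\varphi(t)=t^{\alpha_\psi^0}$, and finish with the pointwise domination $t^{-1/\alpha_\psi^0}\ge t^{-1/\beta_M^\infty}$ on $(0,1]$. The second and third reductions are correct and match what the paper does.

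There is, however, a genuine gap in the first step, precisely at the point where you assert which hypotheses are ``exactly those needed to apply the results of [A-16].'' You supply only the upper bound $\beta_\varphi^0\le\beta_\psi^0<2$. The result actually invoked (Corollary~3.3 of [A-16]) also requires a lower bound: $\varphi$ must be equivalent to a $(1+\varepsilon)$-convex function for small arguments, i.e.\ $\alpha_\varphi^0>1$. This is not automatic --- the lemma's hypothesis only gives $\beta_\psi^0<2$, and a priori one could have $\alpha_\psi^0=1$. The paper closes this with a separate argument that your proposal omits entirely: since $1<\alpha_M^\infty\le\beta_M^\infty<2$, both $M$ and $\tilde M$ satisfy $\Delta_2^\infty$, so $L_M$ and $L_M^*$ are separable and maximal, hence $L_M$ is reflexive by the Ogasawara theorem and contains no isomorphic copy of $\ell^1$; but $\ell^{\alpha_\psi^0}$ embeds into $\ell_\psi\approx H\subset L_M$ by Theorem~4.a.9 of [LT77], which rules out $\alpha_\psi^0=1$. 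Only then does Lemma~\ref{Lemma new1} give $1<\alpha_\psi^0\le\alpha_\varphi^0\le\beta_\varphi^0\le\beta_\psi^0<2$, and Lemma~\ref{Lemma 20} upgrades this to the $(1+\varepsilon)$-convexity and $(2-\varepsilon)$-concavity under which [A-16] applies. Without establishing $\alpha_\psi^0>1$, your appeal to [A-16] is not justified as written.
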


\begin{proof}
First of all, $\ell^{\alpha_\psi^0}$ is isomorphic to some subspace of the Orlicz space $\ell_\psi$ (see \cite[Theorem~4.a.9]{LT77} or 
\S\,\ref{prel2}). Consequently, by the assumption, $L_M$ contains a subspace isomorphic to $\ell^{\alpha_\psi^0}$. On the other hand, since $1<\alpha_M^\infty\le \beta_M^\infty<2$, we have $L_M\in\Delta_2^\infty$ and $L_M^*=L_{\tilde{M}}\in\Delta_2^\infty$ (see \S\,\ref{prel2}). Hence, the spaces $L_M$ and $L_M^*$ are maximal and separable. Then, by the well-known Ogasawara theorem (see, e.g., \cite[Theorem~X.4.10]{KA}), $L_M$ is reflexive. Therefore, $L_M$ does not contain subspaces isomorphic to $\ell^1$, whence $\alpha_\psi^0>1$. Thus, from the condition and Lemma \ref{Lemma new1} it follows that $1<\alpha_\varphi^0\le \beta_\varphi^0<2$.

Further, applying Lemma \ref{Lemma 20}, we obtain that, if $\varepsilon>0$ is sufficiently small, then the function $\varphi$ is $(1+\varepsilon)$-convex and $(2-\varepsilon)$-concave for small values of the argument. Moreover, since $\varphi\in C_{\psi,1}^0$, by \cite[Theorem~4.a.8]{LT77} (see also \S\,\ref{prel2}), the space $\ell_{\varphi}$ is isomorphic to some subspace of the space $\ell_\psi$. Thus, $L_M$ contains a strongly embedded subspace isomorphic to $\ell_{\varphi}$, and we can apply Corollary~3.3 from \cite{A-16} to conclude that $1/\varphi^{-1}\in L_M$.

To prove the second statement of the lemma, note that the function $\varphi(t)=t^{\alpha_\psi^0}$ belongs to the set $C_{\psi,1}^0$ (see \S\,\ref{prel2}). Therefore, as was proven, $t^{-1/\alpha_\psi^0}\in L_M$. Hence, if additionally $t^{-1/\beta_M^\infty}\not\in L_M$, then it follows that $\alpha_\psi^0>\beta_M^\infty$.
\end{proof}

\subsection{A version of Vall\'{e}e Poussin's criterion.\\}
\label{aux3}

The following simple fact will be used below.

\begin{lemma}\label{lemma element}
Let $N$ be an increasing, continuous function on the half-axis $[0, \infty)$ such that $N(u)/u$ increases for $u>0$ and $N(0) = 0$. Then, if $N\in \Delta_2$ (resp. $N\in \Delta_2^\infty$), then $N$ is equivalent to the Orlicz function $M$, defined by $M(t)=\int_0^t N(u)\,{ du}/{u}$ if $t>0$ and $M(0)=0$, on $[0,\infty)$ (resp. for large values of the argument).
\end{lemma}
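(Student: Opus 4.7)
The plan is to show $M \asymp N$ by a pair of elementary inequalities coming from the monotonicity of $u \mapsto N(u)/u$ together with the $\Delta_2$ (or $\Delta_2^\infty$) assumption.

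First I would note that $M$ is well-defined and is genuinely an Orlicz function: continuity and monotonicity of $N$ together with the fact that $N(u)/u$ is increasing imply that the integrand is bounded on any interval $(0,t]$ (by $N(t)/t$), so the integral converges; $M(0)=0$ is built in; $M$ is convex because its derivative $N(u)/u$ is nondecreasing; and $M$ is clearly increasing and continuous.

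Next I would establish the easy upper bound. Since $u \mapsto N(u)/u$ is increasing, for every $t>0$
\begin{equation*}
M(t)=\int_0^t \frac{N(u)}{u}\,du \;\le\; \frac{N(t)}{t}\cdot t \;=\; N(t),
\end{equation*}
which holds on all of $[0,\infty)$ with no growth hypothesis needed. For the reverse inequality the key observation is that, again using monotonicity of $N(u)/u$,
\begin{equation*}
M(2t)\;\ge\;\int_t^{2t}\frac{N(u)}{u}\,du\;\ge\;N(t)\int_t^{2t}\frac{du}{u}\;=\;N(t)\ln 2,
\end{equation*}
so $N(t)\le (\ln 2)^{-1} M(2t)$. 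The remaining step is therefore to show $M(2t)\le C\, M(t)$ for all $t>0$ (in the $\Delta_2$ case) or for $t$ large enough (in the $\Delta_2^\infty$ case), and combine.

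To obtain $M\in\Delta_2$ when $N\in\Delta_2$, I would change variables $u=2v$:
\begin{equation*}
M(2t)=\int_0^{2t}\frac{N(u)}{u}\,du=\int_0^{t}\frac{N(2v)}{v}\,dv\;\le\;C\int_0^{t}\frac{N(v)}{v}\,dv\;=\;C\,M(t),
\end{equation*}
where the inequality uses $N(2v)\le C N(v)$. Combined with $N(t)\le(\ln 2)^{-1}M(2t)\le C(\ln 2)^{-1}M(t)$ and the already proved $M(t)\le N(t)$, this gives $M\asymp N$ on $[0,\infty)$. For the $\Delta_2^\infty$ case, the same substitution yields
\begin{equation*}
M(2t)=\int_0^{1/2}\frac{N(2v)}{v}\,dv+\int_{1/2}^{t}\frac{N(2v)}{v}\,dv,
\end{equation*}
where the first integral is a finite constant (the integrand being bounded by the increasing function $N(2v)/v$ at $v=1/2$), and in the second we have $2v\ge 1$, so $N(2v)\le CN(v)$ applies and the integral is at most $C\,M(t)$. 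Since $M(t)\to\infty$ as $t\to\infty$, for $t$ sufficiently large the constant is absorbed and $M(2t)\le C' M(t)$; the two-sided comparison $M\asymp N$ then follows for large values of the argument exactly as before. The only mildly delicate point is this last absorption step, but it is routine given that $M$ is increasing and unbounded.
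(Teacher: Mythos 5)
Your proof is correct and follows essentially the same route as the paper: the upper bound $M\le N$ from the monotonicity of $N(u)/u$, and a lower bound obtained by integrating $N(u)/u$ over a dyadic interval and invoking the $\Delta_2$-condition. The paper is slightly more economical at the second step, writing $M(t)\ge\int_{t/2}^{t}N(u)\,du/u\ge N(t/2)\ge K^{-1}N(t)$, which applies $\Delta_2$ to $N$ directly and thereby avoids your intermediate step of first transferring the $\Delta_2$-condition to $M$ (and the attendant absorption of the additive constant in the $\Delta_2^\infty$ case).
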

\begin{proof}
Assume that $N\in \Delta_2$ (the case, when $N\in \Delta_2^\infty$ can be treated in the same way).

Note that $M$ is an increasing, continuous function on the half-axis $[0, \infty)$. Moreover, since the function $M'(t)=N(t)/t$ is increasing, then $M$ is an Orlicz function and $M(t)\le N(t)$, $t>0$. The opposite estimate follows from the condition $N\in \Delta_2$:
$$
M(t)\ge\int_{t/2}^t N(u)\frac{du}{u}\ge N(t/2)\ge K^{-1}N(t),\;\;t>0,$$
where $K$ is the $\Delta_2$-constant of $N$. Thus, $M$ and $N$
are equivalent on $[0,\infty)$, and the proof is completed.
\end{proof}

The proof of the following statement, which is a variant of the famous Vall\'{e}e Poussin's criterion (see, e.g., \cite{Al-94}, \cite{CFMN}, \cite{LMT}) can be found in the paper \cite{A-22}.

\begin{lemma}\label{lemma 3}
Let $M$ be an Orlicz function such that $M\in \Delta_2^\infty$ and $\tilde{M}\in \Delta_2^\infty$. For any $f\in L_M$ there is a function $N,$ equivalent to some Orlicz function for large values of the argument and satisfying the following conditions: $N(1)=1,$ $N\in \Delta_2^\infty$, $\tilde {N}\in \Delta_2^\infty$,
\begin{equation*}\label{eq10}
\lim_{u\to\infty}\frac{N(u)}{M(u)}=\infty
\end{equation*}
and
\begin{equation*}\label{eq11}
\int_0^1N(|f(t)|)\,dt<\infty.
\end{equation*}
Moreover, if in addition $M$ is $p$-convex for large values of the argument, then, along with the preceding properties, $N$ is also equivalent to some $p$-convex Orlicz function for large values of the argument.
\end{lemma}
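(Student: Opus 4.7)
My plan is to compose $M$ with an auxiliary Orlicz function $\Phi$ produced by the classical Vall\'ee Poussin criterion for $L^1$, and to verify that $N := \Phi\circ M$ inherits every property required by the statement.

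Without loss of generality (rescaling $f$ and then $N$) I would assume $\int_0^1 M(|f(t)|)\,dt<\infty$, so that $g(t):=M(|f(t)|)$ is a fixed element of $L^1[0,1]$. The task then reduces to producing a convex Orlicz function $\Phi$ with $\Phi(1)=1$, $\Phi(t)/t\to\infty$ as $t\to\infty$, $\Phi\in\Delta_2^\infty$, and $\int_0^1\Phi(g)\,dt<\infty$. I would build $\Phi$ as $\Phi(t)=\int_0^t\Phi'(s)\,ds$, with $\Phi'$ a nondecreasing step function taking the value $a_n$ on the dyadic interval $[2^n,2^{n+1})$. Choosing $a_n\uparrow\infty$ with $a_{n+1}/a_n\to 1$ delivers $\Phi\in\Delta_2^\infty$, since then $\Phi(2t)/\Phi(t)\to 2$, and $\Phi(t)/t\to\infty$ is automatic because $\Phi'(t)\to\infty$. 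Because $g\in L^1$ forces $\sum_n 2^n\,m\{g>2^n\}<\infty$, a standard summability trick furnishes a slowly divergent $a_n$ for which $\sum_n a_n\cdot 2^n\,m\{g>2^n\}$ is still finite, which in turn bounds $\int_0^1\Phi(g)\,dt$.

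Now set $N(u):=\Phi(M(u))$. As a composition of convex, increasing, continuous functions vanishing at $0$, $N$ is itself an Orlicz function with $N(1)=1$. The relation $N(u)/M(u)=\Phi(M(u))/M(u)\to\infty$ and the finiteness of $\int_0^1 N(|f|)\,dt=\int_0^1\Phi(g)\,dt$ are immediate. The condition $N\in\Delta_2^\infty$ chains: $N(2u)=\Phi(M(2u))\le\Phi(K_M M(u))\le K_\Phi N(u)$ for large $u$, using $M,\Phi\in\Delta_2^\infty$ in turn. For $\tilde N\in\Delta_2^\infty$, which is equivalent to $\alpha_N^\infty>1$, I would apply the elementary composition estimate $\alpha_N^\infty\ge\alpha_\Phi^\infty\cdot\alpha_M^\infty$; since every convex Orlicz function has lower index at least $1$ and $\alpha_M^\infty>1$ by hypothesis ($\tilde M\in\Delta_2^\infty$), the strict inequality follows. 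For the $p$-convex addendum, replacing $M$ by an equivalent $p$-convex representative we may assume that $t\mapsto M(t^{1/p})$ is genuinely convex; then $N(t^{1/p})=\Phi(M(t^{1/p}))$ is convex as a composition of convex nondecreasing functions, so $N$ is $p$-convex for large values of the argument.

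The one delicate point is the selection of the sequence $a_n$: it must be slow enough that $a_{n+1}/a_n$ stays bounded (for $\Phi\in\Delta_2^\infty$) yet fast enough to force $\Phi(t)/t\to\infty$ and to dominate the summable sequence of tail masses $2^n\,m\{g>2^n\}$. These demands are compatible, since for any absolutely summable nonnegative sequence there exists a slowly divergent multiplier preserving summability; everything else is routine bookkeeping with $\Delta_2$-constants and Matuszewska--Orlicz indices for compositions.
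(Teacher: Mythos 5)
The paper does not actually prove Lemma \ref{lemma 3}: it states that the proof ``can be found in the paper \cite{A-22}'', so there is no in-paper argument to compare yours against. Judged on its own merits, your proposal is correct and follows what is essentially the standard route: since $M\in\Delta_2^\infty$, $f\in L_M$ gives $g:=M(|f|)\in L^1$; the classical de la Vall\'ee Poussin construction produces a convex $\Phi$ with $\Phi(1)=1$, $\Phi(t)/t\to\infty$, $\Phi\in\Delta_2^\infty$ and $\int_0^1\Phi(g)\,dt<\infty$; and $N:=\Phi\circ M$ inherits all the required properties. Each verification you sketch goes through: $N\in\Delta_2^\infty$ by chaining the two $\Delta_2$-constants; $\tilde{N}\in\Delta_2^\infty$ either by your index inequality $\alpha_N^\infty\ge\alpha_\Phi^\infty\alpha_M^\infty>1$ or, more simply, from $\Phi(cu)\ge c\,\Phi(u)$ for $c\ge1$ (convexity of $\Phi$) combined with the $\nabla_2$-type reformulation of $\tilde{M}\in\Delta_2^\infty$; and $p$-convexity passes through the composition because $\Phi$ is convex and nondecreasing, with equivalence of the resulting $N$'s preserved since $\Phi\in\Delta_2^\infty$. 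The only step I would ask you to write out is the selection of $(a_n)$: the most natural choice $a_n=R_n^{-1/2}$ with $R_n=\sum_{k\ge n}2^k\,m\{g>2^k\}$ gives summability of $\sum a_n2^n m\{g>2^n\}$ and $a_n\uparrow\infty$, but does \emph{not} automatically give bounded ratios $a_{n+1}/a_n$, since the tails $R_n$ may decay arbitrarily fast; one must then regularize, e.g.\ replace $a_n$ by $\min_{k\le n}\bigl(a_k2^{\,n-k}\bigr)$, which keeps $a_n\uparrow\infty$ and the weighted sum finite while forcing $a_{n+1}\le 2a_n$, hence $\Phi\in\Delta_2^\infty$. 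With that detail supplied, the argument is complete.
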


\subsection{A description of subspaces of Orlicz spaces generated by   mean zero identically distributed independent functions.\\}
\label{aux1}

Recall (see, for instance, \cite[Chapter~2]{KS}) that a set of functions $\{f_k\}_{k=1}^n$, measurable on $[0,1]$, is called
{\it independent} if for any intervals $I_k\subset \hj$ we have
$$
m\{t\in [0,1]:\,f_k(t)\in
I_k,\,k=1,2,\dots,n\}=\prod_{k=1}^n m\{t\in [0,1]:\,f_k(t)\in I_k\}.$$ 
It is said that $\{f_k\}_{k=1}^\infty$ is a {\it sequence of independent functions} if the set $\{f_k\}_{k =1}^n$ is independent for each $n\in\fg$.

Let $M$ be an Orlicz function, $M\in \Delta_2^\infty$, $L_M=L_M[0,1]$ be the Orlicz space, $\{f_k\}_{k=1}^ \infty$ be a sequence of mean zero independent functions, equimeasurable with a function $f\in L_M$. Then (see \cite[p.~794]{JS} or \cite{AS-08}), with equivalence constants independent of $a_k\in\mathbb{R}$, $k=1,2,\dots$, we have
$$
\Big\|\sum_{k=1}^\infty a_kf_k\Big\|_{L_M}\asymp \Big\|\Big(\sum_{k=1}^\infty a_k^2f_k^2\Big)^{1/2}\Big\|_{L_M}.$$
In turn, if $\theta(u)=u^2$ for $0\le u\le 1$, $\theta(u)=M(u)$ for $u\ge 1$ and $\ell_\psi$ is the Orlicz sequence space, generated by the function
\begin{equation}\label{psi}
\psi(u):=\int_0^1\theta(u|f(t)|)dt,\;\;u\ge 0,
\end{equation}
then, according to  \cite[Theorem~8]{Mont-02}, it holds
$$
\Big\|\Big(\sum_{k=1}^\infty a_k^2f_k^2\Big)^{1/2}\Big\|_{L_M}\asymp \|(a_k)\|_{{\ell_\psi}}.
$$
Hence,
\begin{equation}
\Big\|\sum_{k=1}^\infty a_kf_k\Big\|_{L_M}\asymp\|(a_k)\|_{{\ell_\psi}},
\label{Luxem}
\end{equation}
which means that the sequence $\{f_k\}_{_{k=1}}^\infty$ is equivalent in $L_M$ to the canonical basis $\{e_k\}_{_{k=1}}^\infty$ in the Orlicz sequence space ${\ell_\psi}$, where $\psi$ is defined by \eqref{psi}.

Observe that, in general, $\theta$ is not an Orlicz function. However, the function $\theta(t)/t$ is increasing, continuous and from the condition $M\in \Delta_2^\infty$ it follows $\theta\in\Delta_2$. Therefore, by Lemma \ref{lemma element}, $\theta$ is equivalent on $(0,\infty)$ to the Orlicz function $\tilde{\theta}(t):=\int_0^t{\theta(u) }/{u}\,du.$ This and \eqref{psi} imply that $\psi$ is also equivalent to some Orlicz function.

Next, for every measurable function $x(t)$ on $[0,1]$ and any sequence $a = (a_k)_{k=1}^\infty$ of reals we set
$$
(a \bar\otimes x)(s):=\sum_{k=1}^\infty a_kx(s-k+1)\chi_{(k,k+1)}(s),\;\;s>0.$$
As is easy to see, the distribution function of the function $a \bar\otimes x$ is equal to the sum of the distribution functions of the terms  $a_kx$, $k=1,2,\dots$:
$$
n_{a\bar\otimes x}(\tau)= \sum_{k=1}^\infty n_{a_k x}(\tau),\quad \tau>0.$$

As above, suppose that $M$ is an Orlicz function, $\{f_k\}_{k=1}^\infty$ be a sequence of mean zero independent functions, equimeasurable with some function $f\in L_M$. According to the well-known Johnson-Schechtman theorem \cite[Theorem~1]{JS}, with constants that do not depend on $a_k\in\mathbb{R}$, $k=1,2,\dots$, we have
\begin{equation*}
\label{JoSh}
\Big\|\sum_{k=1}^\infty a_kf_k\Big\|_{L_M}\asymp \|(a \bar\otimes f)^*\chi_{[0,1]}\|_{L_M}+\|(a \bar\otimes f)^*\chi_{[1,\infty)}\|_{L^2}.
\end{equation*}
Combining this together with \eqref{Luxem}, we obtain
\begin{equation}
\|(a_k)\|_{{\ell_\psi}}\asymp \|(a \bar\otimes f)^*\chi_{[0,1]}\|_{L_M}+\|(a \bar\otimes f)^*\chi_{[1,\infty)}\|_{L^2}.
\label{Luxem1}
\end{equation}
In particular, the function 
$$
\Big(\Big(\sum_{k=1}^n e_k\Big)\bar\otimes f\Big)(s)=\sum_{k=1}^n f(s-k+1)\chi_{(k,k+1)}(s)$$
is equimeasurable with the function $f(t/n)$, $t>0$.
Thus, if $f=f^*$, then, taking into account that the fundamental function $\phi_{\ell_\psi}$ satisfies \eqref{fund} (see \S\,\ref{prel2}), by \eqref{Luxem1} and the definition of the dilation operator $\sigma_\tau$ (see \S\,\ref{prel1}), we get
\begin{eqnarray}
\frac{1}{\psi^{-1}(1/n)}&\asymp& \|\sigma_nf\|_{L_M}+\|f(\cdot/n)\chi_{[1,\infty)}\|_{L^2}\nonumber\\
&=&
\|{\sigma}_nf\|_{L_M}+\Big(n\int_{1/n}^1f(s)^2\,ds\Big)^{1/2},\;\;n\in\mathbb{N}.
\label{Luxem2}
\end{eqnarray}

Let us illustrate the above discussion with two examples, showing that the studied properties of the subspace $[f_k]:=[f_k]_{L_M}$, spanned by a sequence of independent copies of a mean zero function $f\in L_M$  and isomorphic to some Orlicz sequence space ${\ell_\psi}$ (see \eqref{psi}), depend not only on degree of "closeness"\;of the function $\psi$ to the function $M$, but also on whether the function $t^{-1/\beta_M^ \infty}$  belongs to the space $L_M$ or not (see \cite{A-22}).

\begin{ex}
\label{ex1}
Let $1<p<2$, $M(u)=u^{p}$ (that is, $L_M=L^p$), $f(t):=t^{-1/p}\ln^{-3/(2p)}(e/t)$, $0<t\le 1$. Then, $f=f^*$, and, if $[f_k]_{L^p}=\ell_\psi$ and $[f_k]_{L^1}=\ell_{\varphi}$, by \eqref{Luxem2}
(see also \cite[Proposition~2.4]{ASZ-15}), we have
\begin{equation}\label{f cond}
\frac1{\psi^{-1}(t)}\asymp \left(\frac1t\int_0^tf(s)^p\,ds\right)^{1/p}+\left(\frac1t\int_t^1f(s)^2\,ds\right)^{1/2},\quad 0<t\le 1,
\end{equation}
\begin{equation}\label{s cond}
\frac1{\varphi^{-1}(t)}\asymp \frac1t\int_0^tf(s)\,ds+\left(\frac1t\int_t^1f(s)^2\,ds\right)^{1/2},\quad 0<t\le 1.
\end{equation}
Now, a combination of standard estimates with integration by parts leads to the following equivalences (the constants of which depend only on $p$):
$$
\frac1t\int_0^tf(s)^p\,ds=\frac1t\int_0^t \ln^{-3/2}(e/s)\,\frac{ds}{s}\asymp \frac{1}{t\ln^{1/2}(e/t)},\;\;0<t\le 1,$$
$$
\frac1t\int_0^tf(s)\,ds=\frac1t\int_0^t s^{-1/p}\ln^{-3/(2p)}(e/s)\,ds\asymp \frac{1}{t^{1/p}\ln^{3/(2p)}(e/t)},\;\;0<t\le 1,$$
and
$$
\frac1t\int_t^1f(s)^2\,ds=\frac1t\int_t^1 s^{-2/p}\ln^{-3/p}(e/s)\,ds\asymp \frac{1}{t^{2/p}\ln^{3/p}(e/t)},\;\;0<t\le 1/2.
$$
Therefore, applying relations \eqref{f cond} and \eqref{s cond}, we obtain 
$$
\psi^{-1}(t)\asymp t^{1/p}\ln^{1/(2p)}(e/t)\;\;\mbox{and}\;\;
\varphi^{-1}(t)\asymp t^{1/p}\ln^{3/(2p)}(e/t),\;\;0<t\le 1.$$
Hence, the functions $\psi$ and $\varphi$ are not equivalent, and hence $\ell_\psi\stackrel{\ne}{\subset}\ell_{\varphi}$. Thus, $[f_k]_{L^p}$ is not a $\Lambda(p)$-subspace.
\end{ex}

In the next example, as in the preceding one, the function $\psi$ is "close"\;to $M$, differing only by a power of the logarithm. However, now $t^{-1/\beta_M^\infty}\in L_M$ (in the example \ref{ex1}, on the contrary, $\beta_M^\infty=p$, and hence $t^{-1/\beta_M^\infty}\not\in L_M=L^p$), and, as a result, the subspace $[f_k]_{L_M}$, isomorphic to the space $\ell_\psi$, is strongly embedded in $L_M$.

\begin{ex}
\label{ex2}
Let $1<p<2$, $0<\alpha<1/p$, $M(u)$ be an Orlicz function equivalent to the function $u^p\ln^{-2} u$ for large values of $u$, $f(t):=t^{-1/p}\ln^{\alpha}(e/t)$, $0<t\le 1$.
Since
$$
\int_0^1M(f(t))\,dt\asymp \int_0^1\ln^{p\alpha-2}(e/t)\,dt/t<\infty,$$
then $f\in L_M$ due to the choice of parameters $p$ and $\alpha$.

Consider an Orlicz function $\psi$ such that $\psi(s)\asymp s^p\ln^{p\alpha}(e/s)$ for small values of the argument. On the one hand, it is immediately verified that $1/\psi^{-1}(t)\asymp f(t)$, $0<t\le 1$. On the other hand, for some $C>0$
$$
\psi(st)\le C\psi(s)\psi(t),\;\;0\le s,t\le 1.$$
Therefore, by \cite[Theorem~4.1]{ASZ-22}, for every symmetric space $X$ such that $f\in X$, we have $[f_k]_X\approx\ell_\psi$, where, as above, $\{f_k\}$ is a sequence of mean zero independent functions, equimeasurable with $f$. In particular,
$[f_k]_{L_M}\approx[f_k]_{L^1}\approx \ell_\psi$, and hence the subspace $[f_k]_{L_M}$ is strongly embedded in $L_M$. Moreover, as we will see in Theorem \ref{cor: new11}, due to the submultiplicativity of $\psi$, the unit ball of this subspace consists of functions having equicontinuous norms in $L_M$.
\end{ex}

In what follows, we will repeatedly use the following statement, which follows from the results of the paper \cite{ASZ-15} on the uniqueness of the distribution of a function whose independent copies span a given subspace in the $L^p$-space.

\begin{lemma}\label{lemma 4dop}
Let $M$ be an Orlicz function, $M\in \Delta_2^\infty$, $f\in L_M$. Suppose that the subspace $[f_k]_{L_M}$, where $\{f_k\}$ is a sequence of independent functions equimeasurable with $f$ and such that $\int_0^1 f_k(t)\,dt =0$, is strongly embedded in $L_M$. Then, if $[f_k]_{L_M}=\ell_\psi$, where $1<\alpha_\psi^0\le\beta_\psi^0<2$, then $n_f(\tau)\asymp n_{ 1/\psi^{-1}}(\tau)$ for large $\tau>0$.
\end{lemma}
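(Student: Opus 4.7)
The plan is to reduce the problem from the Orlicz space $L_M$ to the analogous situation in $L^1$, where the uniqueness theorem \cite[Theorem~1.1]{ASZ-15} applies directly, and then to invoke that theorem.

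First, I will establish that on the subspace $H := [f_k]_{L_M}$ the $L_M$-norm is equivalent to the $L^1$-norm. Since the normalization $\|\chi_{[0,1]}\|_{L_M}=1$ yields the continuous embedding $L_M \subset L^1$ with constant $1$, the estimate $\|x\|_{L^1} \le \|x\|_{L_M}$ is immediate for every $x \in L_M$. For the reverse inequality on $H$, I will argue by contradiction: if there were a sequence $\{x_n\} \subset H$ with $\|x_n\|_{L_M}=1$ and $\|x_n\|_{L^1} \to 0$, then $x_n$ would tend to zero in $L^1$, hence in measure on $[0,1]$; but since $H$ is strongly embedded in $L_M$ by hypothesis, this would force $\|x_n\|_{L_M} \to 0$, a contradiction. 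Thus some constant $c>0$ satisfies $c\|x\|_{L_M} \le \|x\|_{L^1}$ for every $x \in H$.

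Combining this norm equivalence with the hypothesis $[f_k]_{L_M} \approx \ell_\psi$ immediately yields
$$\Big\|\sum_{k=1}^\infty a_k f_k\Big\|_{L^1} \asymp \|(a_k)\|_{\ell_\psi},$$
so $\{f_k\}$ is equivalent in $L^1$ to the canonical basis of $\ell_\psi$ as well; in other words, $[f_k]_{L^1} \approx \ell_\psi$. Since $f \in L_M \subset L^1$ is mean zero and the $f_k$ are independent copies of $f$, the hypothesis $1 < \alpha_\psi^0 \le \beta_\psi^0 < 2$ places $\psi$ sufficiently far from both of the extreme functions $t^1$ and $t^2$ so that \cite[Theorem~1.1]{ASZ-15} becomes applicable at the exponent $p=1$. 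Invoking that uniqueness theorem then produces the required conclusion $n_f(\tau) \asymp n_{1/\psi^{-1}}(\tau)$ for large $\tau > 0$.

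The main obstacle is the norm-equivalence step, which is however a standard consequence of strong embedding together with $L_M \hookrightarrow L^1$. A secondary subtlety is to verify that the "far from extremes" condition in \cite[Theorem~1.1]{ASZ-15} is phrased in precisely the form $1<\alpha_\psi^0\le\beta_\psi^0<2$; if the cited theorem instead uses $p$-convex/$p$-concave language, the equivalent reformulation is furnished by Lemma \ref{Lemma 20} above.
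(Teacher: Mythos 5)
Your proposal is correct and follows essentially the same route as the paper: it uses the strong-embedding hypothesis to get the equivalence of the $L_M$- and $L^1$-norms on $[f_k]$ (the paper asserts this directly; your contradiction argument via Chebyshev is the standard justification), transfers the $\ell_\psi$-equivalence to $L^1$, and then applies \cite[Theorem~1.1]{ASZ-15} at $p=1$, with Lemma \ref{Lemma 20} supplying the $(1+\varepsilon)$-convexity and $(2-\varepsilon)$-concavity of $\psi$ required there. The only cosmetic difference is that the paper extracts just the fundamental-function relation $1/\psi^{-1}(1/n)\asymp\bigl\|\sum_{k=1}^n f_k\bigr\|_{L^1}$ before citing the uniqueness theorem, whereas you carry over the full basis equivalence; both suffice.
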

\begin{proof}
By the assumption, with constants independent of $n\in\mathbb{N}$ and $a_k\in\mathbb{R}$, we have
$$
\Big\|\sum_{k=1}^n a_kf_k\Big\|_{L_M}\asymp\Big\|\sum_{k=1}^n a_kf_k\Big\|_{L^1}.$$
Furthermore, since $[f_k]_{L_M}\approx \ell_\psi$, then from  \eqref{Luxem} it follows
\begin{equation*}
\frac{1}{\psi^{-1}(1/n)}=\Big\|\sum_{k=1}^n e_k\Big\|_{\ell_\psi}\asymp\Big\|\sum_{k=1}^n f_k\Big\|_{L_M},\;\;n\in\mathbb{N}.
\label{Luxem5}
\end{equation*}
Thus, with constants independent of $n\in\mathbb{N}$, we have
$$
\frac{1}{\psi^{-1}(1/n)}\asymp\Big\|\sum_{k=1}^n f_k\Big\|_{L^1}.$$
Since $1<\alpha_\psi^0\le\beta_\psi^0<2$, then in view of Lemma \ref{Lemma 20}, the function $\psi$ is $(1+\varepsilon)$-convex and $( 2-\varepsilon)$-concave for small values of the argument if $\varepsilon>0$ is sufficiently small.
Consequently, the statement of the lemma is a direct consequence of the last equivalence and Theorem~1.1 from \cite{ASZ-15}, applied in the case of $p=1$.
\end{proof}

\vskip0.2cm

\section{The main results.}
\label{main}

\subsection{A characterization of properties of subspaces generated by independent copies of a mean zero function $f$ in terms of dilations of $f$.\\}
\label{main2}

Let us start with a sufficient (and necessary in many cases) condition, under which a sequence of independent copies of a mean zero  function $f\in L_M$ spans in the given Orlicz space $L_M$ a strongly embedded subspace.

\begin{prop}\label{prop 1a}
Let $M$ be an  Orlicz function, $f\in L_M$. 

(i) if  $\lim_{t\to\infty}M(t)/t=\infty$ and 
\begin{equation}\label{general condition}
\|\sigma_n f\|_{L_M}\preceq \|\sigma_n f\|_{L^1},\;\;n\in\fg,
\end{equation}
then the subspace $[f_k]$ spanned by a sequence of mean zero independent functions $\{f_k\}$, equimeasurable with $f$, is strongly embedded in $L_M$.

(ii) Conversely, if such a sequence $\{f_k\}$ as in (i) spans in $L_M$ a strongly embedded subspace, isomorphic to an Orlicz space
${\ell_\psi}$, with $1<\alpha_\psi^0\le\beta_\psi^0<2$,
then inequality \eqref{general condition} holds.
\end{prop}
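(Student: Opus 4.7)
The plan rests on the identification $[f_k]\approx\ell_\psi$ via \eqref{Luxem}, with $\psi$ given by \eqref{psi}, together with formula \eqref{Luxem2} expressing $1/\psi^{-1}(1/n)$ as $\|\sigma_n f\|_{L_M}+G_n$, where $G_n:=(n\int_{1/n}^1 f(s)^2\,ds)^{1/2}$. Throughout I identify $f$ with $f^*$, which is harmless since only the distribution of $f$ enters these expressions. Set $\theta_1(u):=u^2$ on $[0,1]$ and $\theta_1(u):=u$ on $[1,\infty)$, and define $\varphi(u):=\int_0^1\theta_1(u|f(t)|)\,dt$; then the same identification applied to $L^1$ in place of $L_M$ gives $[f_k]_{L^1}\approx\ell_\varphi$ and $1/\varphi^{-1}(1/n)\asymp\|\sigma_n f\|_{L^1}+G_n$.

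\emph{Part (i).} The strategy is to show that the $L_M$- and $L^1$-norms are equivalent on $[f_k]$ and then to invoke Lemma \ref{lemma 2dop}, whose hypothesis $L_M\ne L^1$ is ensured by $\lim_{t\to\infty}M(t)/t=\infty$. The continuous embedding $L_M\subseteq L^1$ yields $\|\sigma_n f\|_{L^1}\le\|\sigma_n f\|_{L_M}$, while \eqref{general condition} supplies the reverse estimate; adding $G_n$ on both sides and comparing the two instances of \eqref{Luxem2} gives $\psi^{-1}(1/n)\asymp\varphi^{-1}(1/n)$ for $n\in\mathbb{N}$. Extending to all sufficiently small arguments by the $\Delta_2^0$-property of $\psi$ and $\varphi$, we obtain $\psi\asymp\varphi$ for small values, hence $\ell_\psi=\ell_\varphi$ isomorphically, which combined with \eqref{Luxem} delivers the desired norm equivalence on $[f_k]$.

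\emph{Part (ii).} The strong embedding hypothesis yields $\|\sum_{k=1}^n f_k\|_{L_M}\preceq\|\sum_{k=1}^n f_k\|_{L^1}$, which via the two instances of \eqref{Luxem2} becomes $\|\sigma_n f\|_{L_M}\preceq\|\sigma_n f\|_{L^1}+G_n$. The main technical obstacle is to absorb the ``Gaussian'' term $G_n$ into $\|\sigma_n f\|_{L^1}$. To this end, I first apply Lemma \ref{lemma 4dop} to conclude $f(t)\asymp 1/\psi^{-1}(t)$ for small $t$, and then pick any $p_1\in(\beta_\psi^0,2)$. The definition of $\beta_\psi^0$ gives $\psi(\alpha t)\ge c\alpha^{p_1}\psi(t)$ for $0<\alpha,t\le 1$, which on inversion yields $\psi^{-1}(\alpha\beta)\le C\alpha^{1/p_1}\psi^{-1}(\beta)$ on the same range, and thereby transfers to the pointwise bound $f(s)\preceq(sn)^{-1/p_1}f(1/n)$ for $1/n\le s\le 1$. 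Since $2/p_1>1$, a direct integration gives $G_n\preceq f(1/n)$, while the monotonicity of $f$ supplies the trivial bound $\|\sigma_n f\|_{L^1}=n\int_0^{1/n}f(s)\,ds\ge f(1/n)$. Combining these, $G_n\preceq\|\sigma_n f\|_{L^1}$, and \eqref{general condition} follows.
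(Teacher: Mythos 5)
Your proof is correct, and part (i) is essentially identical to the paper's argument: compare the two instances of \eqref{Luxem2} using \eqref{general condition} and the trivial embedding $L_M\subseteq L^1$, conclude that the fundamental functions of $\ell_\psi$ and $\ell_\varphi$ are equivalent, and invoke Lemma \ref{lemma 2dop}. In part (ii) you use the same two key ingredients as the paper (Lemma \ref{lemma 4dop} to get $f^*\asymp 1/\psi^{-1}$ for small arguments, and the elementary bound $f^*(1/n)\le n\int_0^{1/n}f^*=\|\sigma_n f\|_{L^1}$), but you take a detour that the paper avoids: you first pass through $\|\sigma_n f\|_{L_M}\preceq \|\sigma_n f\|_{L^1}+G_n$ and then must absorb the Gaussian tail $G_n$ into $\|\sigma_n f\|_{L^1}$, which costs you the $p_1$-concavity estimate from $\beta_\psi^0<2$ and an explicit integration. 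The paper instead observes that \eqref{Luxem2} already gives $\|\sigma_n f\|_{L_M}\le \|\sigma_n f\|_{L_M}+G_n\asymp 1/\psi^{-1}(1/n)\asymp f^*(1/n)\le\|\sigma_n f\|_{L^1}$ for large $n$, so the term $G_n$ never needs separate treatment and $\beta_\psi^0<2$ enters only through the hypothesis of Lemma \ref{lemma 4dop}. Your absorption step is valid (the exponent $2/p_1>1$ does make the integral $n\int_{1/n}^1 f^*(s)^2\,ds$ comparable to $f^*(1/n)^2$), so this is only a matter of economy, not of correctness; the one point worth tightening is that the pointwise bound $f^*(s)\preceq(sn)^{-1/p_1}f^*(1/n)$ is justified by Lemma \ref{lemma 4dop} only on $(0,t_0]$ and should be extended to $[t_0,1]$ by a separate (easy) remark.
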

\begin{proof}
Without loss of generality, assume that $f=f^*$.

(i) According to the discussion in \S\,\ref{aux1}, the sequence $\{f_k\}$ is equivalent in the space $L_M$ (resp. $L^1$) to the canonical basis in some Orlicz sequence space $ {\ell_\psi}$ (resp. $\ell_{\theta}$). Since $\lim_{t\to\infty}M(t)/t=\infty$, then $L_M\ne L^1$. Consequently, by Lemma \ref{lemma 2dop}, it suffices to show that ${\ell_\psi}= \ell_{\theta}$, or it is the same, that the fundamental functions of these spaces are equivalent for small $t>0$ (see \S\,\ref{prel2}). Since, due to \eqref{Luxem2},
\begin{equation}
\label{Luxem2e}
\frac{1}{\psi^{-1}(1/n)}\asymp \|\sigma_nf\|_{L_M}+\Big(n\int_{1/n}^1f(s)^2\,ds\Big)^{1/2},\;\;n\in\mathbb{N},
\end{equation}
and similarly
\begin{equation*}
\frac{1}{\theta^{-1}(1/n)}\asymp \|\sigma_nf\|_{L^1}+\Big(n\int_{1/n}^1f(s)^2\,ds\Big)^{1/2},\;\;n\in\mathbb{N},
\end{equation*}
then the required equivalence follows from condition \eqref{general condition}, formula \eqref{fund} for the fundamental function of an Orlicz space, as well as from the convexity of $\psi$ and ${\theta}$.

(ii) It suffices to show that inequality \eqref{general condition} holds for all $n$ sufficiently large.

Since $\psi^{-1}$ is an increasing, concave function on $(0,1]$, then $\psi^{-1}(t)\le\psi^{-1}(Ct )\le C\psi^{-1}(t)$ for any $C\ge 1$ and all $0<t\le 1$, and also the function $1/\psi^{-1}$ coincides with its non-increasing rearrangement. Moreover, by Lemma \ref{lemma 4dop}, the distribution functions $n_f(\tau)$ and $n_{1/\psi^{-1}}(\tau)$ are equivalent for large $\tau>0$. Combining this together with the definition of the non-increasing rearrangement of a measurable function (see \S\,\ref{prel1}), for some $t_0\in (0,1]$, we get
$$
f(t)\asymp 1/\psi^{-1}(t),\;\;0<t\le t_0.$$
Thus, since \eqref{Luxem2e} is satisfied by the assumtion, for a sufficiently large $n_0\in\mathbb{N}$ we obtain that
$$
\|\sigma_nf\|_{L_M}\preceq f(1/n),\;\; n\ge n_0.$$
Now, inequality \eqref{general condition} for $n\ge n_0$ is a direct consequence of the last estimate and the inequality
$$
f(1/n)\le n\int_0^{1/n} f(u)\,du=\int_0^{1} f(u/n)\,du=\|\sigma_nf\|_{L^1},\;\;n\in\mathbb{N}.$$
This completes the proof.
\end{proof}

In the same terms, we can state also a condition of equicontinuity of the $L_M$-norms of functions of the unit ball of such a subspace of  $L_M$.

\begin{prop}\label{prop 2a}
Let $M$ be an Orlicz function, $\lim_{t\to\infty}M(t)/t=\infty$, $f\in L_M$  and let $\{f_k\}$ be a sequence of mean zero independent functions, equimeasurable with $f$. Consider the following conditions:

(a) the unit ball of the subspace $[f_k]$ consists of functions with equicontinuous norms in $L_M$;

(b) there is a convex, non-decreasing function $N$ on $[0,\infty)$ such that $N(0)=0,$ $N\in\Delta_2^\infty$, $\lim_{u\to \infty}{N(u)}/{M(u)}=\infty$ and
\begin{equation}\label{general condition2}
\|\sigma_n f\|_{L_N}\preceq\|\sigma_n f\|_{L_M},\;\;n\in\fg.
\end{equation}

We have $(b)\Longrightarrow (a)$. If additionally one has $[f_k]_{L_M}\approx {\ell_\psi}$, where $1<\alpha_\psi^0\le\beta_\psi^0<2$, then the inverse implication $(a)\Longrightarrow (b)$ holds as well.
\end{prop}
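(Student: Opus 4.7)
As in the proof of Proposition~\ref{prop 1a}, we assume $f=f^*$ throughout.

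\textbf{Direction $(b)\Longrightarrow(a)$.} Applying the Johnson--Schechtman formula \eqref{Luxem2} in both $L_M$ and $L_N$ (the latter is valid because $N\in\Delta_2^\infty$, and $f\in L_N$ by \eqref{general condition2} with $n=1$), and noting that the $L^2$-tail term appears identically on the two right-hand sides, the hypothesis \eqref{general condition2}, together with the automatic reverse estimate $\|\sigma_n f\|_{L_N}\succeq \|\sigma_n f\|_{L_M}$ coming from the embedding $L_N\hookrightarrow L_M$ (a consequence of $N/M\to\infty$), gives $\|\sigma_n f\|_{L_N}\asymp \|\sigma_n f\|_{L_M}$. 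Consequently, the Orlicz sequence spaces corresponding to $\{f_k\}$ in $L_M$ and in $L_N$ have equivalent fundamental functions and hence coincide (see \S\,\ref{prel2}); in particular, the $L_M$- and $L_N$-norms are equivalent on $[f_k]$. Since $B_{[f_k]}$ is thus bounded in $L_N$, $N/M\to\infty$ and $N\in\Delta_2^\infty$, a standard Vall\'ee Poussin--type argument converts boundedness in $L_N$ into $L_M$-equicontinuity of $B_{[f_k]}$.

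\textbf{Direction $(a)\Longrightarrow(b)$ under the additional hypothesis.} By Lemma~\ref{prop 1ab}, equicontinuity of $B_{[f_k]}$ forces $[f_k]$ to be strongly embedded in $L_M$; Lemma~\ref{lemma 4dop} then yields $n_f(\tau)\asymp n_{1/\psi^{-1}}(\tau)$ for large $\tau$, which, since $f=f^*$ and $1/\psi^{-1}$ is non-increasing, means $f(t)\asymp 1/\psi^{-1}(t)$ for small $t>0$. A set version of the Vall\'ee Poussin criterion (the natural extension of Lemma~\ref{lemma 3} to equicontinuous families in $L_M$) produces a convex Orlicz function $N$ with $N(0)=0$, $N\in\Delta_2^\infty$, $\lim_{u\to\infty} N(u)/M(u)=\infty$, and $\sup_{x\in B_{[f_k]}}\|x\|_{L_N}<\infty$. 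Specialising this bound to the normalised sequence $\sum_{k=1}^n f_k/\|\sum_{k=1}^n f_k\|_{L_M}\in B_{[f_k]}$ gives
\[
\Big\|\sum_{k=1}^n f_k\Big\|_{L_N}\preceq \Big\|\sum_{k=1}^n f_k\Big\|_{L_M},\quad n\in\mathbb{N}.
\]

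\textbf{Closing the loop.} Applying Johnson--Schechtman in $L_N$ (valid since $\|f\|_{L_N}=\|f_1\|_{L_N}$ is finite by the preceding step) yields $\|\sigma_n f\|_{L_N}\preceq \|\sum_{k=1}^n f_k\|_{L_N}$, while in $L_M$ the combination of \eqref{Luxem}, \eqref{fund}, and the identification $f(t)\asymp 1/\psi^{-1}(t)$ gives $\|\sum_{k=1}^n f_k\|_{L_M}\asymp 1/\psi^{-1}(1/n)\asymp f(1/n)$. Therefore $\|\sigma_n f\|_{L_N}\preceq f(1/n)$. On the other hand, the monotonicity $f=f^*$ yields $\sigma_n f(t)=f(t/n)\geq f(1/n)$ for $t\in(0,1)$, whence $\|\sigma_n f\|_{L_M}\geq f(1/n)\|\chi_{[0,1]}\|_{L_M}=f(1/n)$; chaining these two estimates gives \eqref{general condition2}. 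The point that would otherwise be the main obstacle is the appearance of a common $L^2$-tail term on both sides of the two Johnson--Schechtman decompositions, which cannot be cancelled a priori; the additional hypothesis $1<\alpha_\psi^0\leq\beta_\psi^0<2$, through Lemma~\ref{lemma 4dop}, neatly circumvents this by identifying $f$ with $1/\psi^{-1}$, so that the elementary monotonicity bound $\|\sigma_n f\|_{L_M}\geq f(1/n)$ already captures the full fundamental function value $1/\psi^{-1}(1/n)$.
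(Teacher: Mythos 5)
Your proof is correct and follows essentially the same route as the paper: for $(b)\Rightarrow(a)$ both arguments identify the Orlicz sequence spaces generated by $\{f_k\}$ in $L_M$ and $L_N$ via the equivalence $\|\sigma_nf\|_{L_N}\asymp\|\sigma_nf\|_{L_M}$ and then convert $L_N$-boundedness of the unit ball into $L_M$-equicontinuity, while for $(a)\Rightarrow(b)$ both extract $N$ from the de la Vall\'ee Poussin criterion and then chain $\|\sigma_nf\|_{L_N}\preceq f(1/n)\asymp 1/\psi^{-1}(1/n)\le\|\sigma_nf\|_{L_M}$ using Lemma~\ref{lemma 4dop}. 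The only cosmetic difference is that where the paper establishes strong embedding of $[f_k]$ in $L_N$ and invokes Proposition~\ref{prop 1a}(ii) there (getting $\|\sigma_nf\|_{L_N}\preceq\|\sigma_nf\|_{L^1}\le\|\sigma_nf\|_{L_M}$), you inline that argument and use the pointwise bound $\sigma_nf\ge f(1/n)\chi_{[0,1]}$ rather than the $L^1$-norm as the intermediary.
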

\begin{proof}
$(b)\Longrightarrow (a)$. 
First, from \eqref{general condition2} and the condition $f\in L_M$ it follows that $f\in L_N$. Moreover, since
$$
\lim_{u\to\infty}\frac{M(u)}{u}=\lim_{u\to\infty}\frac{N(u)}{u}=\infty,$$ 
then, arguing exactly in the same way as in the proof of Proposition \ref{prop 1a}(i), we can show that the sequence $\{f_k\}$ in both spaces $L_M$ and $L_N$ is equivalent to the canonical basis in the same Orlicz sequence space.
Hence, the norms of these spaces are equivalent on the subspace $H:=[f_k]_{L_M}$, i.e., for some $C>0$
\begin{equation}\label{embedding2}
B_H\subset \{x\in L_N:\,\|x\|_{L_N}\le C\}
\end{equation}
Moreover, due to the conditions and Lemma~3 from \cite{A-14}, we infer that the embedding $L_N\subset L_M$ is {\it strict}. This means that
$$
\lim_{\delta\to 0}\sup_{\|x\|_{L_N}\le 1,m({\rm supp\,}x)\le\delta}\|x\|_{L_M}=0$$
(for more details related to properties of strict embeddings of symmetric spaces, see \cite{AS-19}). As a result we get
$$
\lim_{\delta\to 0}\sup_{x\in B_H,m({\rm supp\,}x)\le\delta}\|x\|_{L_M}=0,$$
and $(a)$ follows.

$(a)\Longrightarrow (b)$. 
Let $H:=[f_k]$. According to the condition and Vall\'{e}e Poussin's criterion (see, e.g., Theorem~3.2 from \cite{LMT}), there exists a nondecreasing convex function $Q$ on $[0,\infty)$ such that $Q(0)=0 $, $Q\in\Delta_2^\infty$, $\lim_{u\to\infty}{Q(u)}/{u}=\infty$ and $\sup_{x\in B_H}\|Q (|x|)\|_{L_M}<\infty$. The last relation means that, for some $C\ge 1$ and all $x\in B_H$, we have
$$
\int_0^1 M\Big(\frac{Q(|x(t)|)}{C}\Big)\,dt\le 1.$$
Since the function $Q$ is convex, then $Q(|x(t)|)/C\ge Q(|x(t)|/C)$, whence
$$
\int_0^1 M\Big(Q\Big(\frac{|x(t)|}{C}\Big)\Big)\,dt\le 1$$
for all $x\in B_H$. Setting $N(u):=M(Q(u))$ and taking into account the properties of the functions $M$ and $Q$, it is easy to verify that the function $N$ satisfies all the conditions listed in (b). In addition, due to the last inequality, embedding \eqref{embedding2} is still valid. Thus, the $L_M$- and $L_N$-norms are equivalent on the subspace $H$. Since $H$ is strongly embedded in $L_M$ by condition and Lemma \ref{prop 1ab}, then $H$ is strongly embedded in $L_N$ as well (see also Lemma \ref{lemma 2dop}). Therefore, applying Proposition \ref{prop 1a}(ii), we get
$$
\|\sigma_n f\|_{L_N}\preceq \|\sigma_n f\|_{L^1}\le\|\sigma_n f\|_{L_M},\;\;n\in\fg.
$$
As a result, inequality \eqref{general condition2} and hence the proposition are proved.
\end{proof}

\subsection{Subspaces of $L_M$ spanned by independent copies of mean zero functions, whose the unit ball consists of functions with equicontinuous  $L_M$-norms.\\}
\label{main3}

Let $h:\,[0,1]\to [0,\infty)$, $h(t)>0$ if $0<t\le 1$. Recall that the dilation function ${\mathcal M}_h$ of $h$ is defined as follows:
$$
{\mathcal M}_h(t):=\sup_{0<s\le \min(1,1/t)}\frac{h(st)}{h(s)},\;\;t>0.$$

\begin{prop}\label{prop 1dop}
Suppose $\psi:\,[0,1]\to [0,1]$ is an increasing and continuous function, $\psi(0)=0$, $\psi(1)=1$.
If $f(t):=1/\psi^{-1}(t)$, $0<t\le 1$, and $g$ is a nonincreasing, nonnegative function on $(0,1]$ such that
\begin{equation}\label{distribution2}
n_g(\tau) = \min\left({\mathcal{M}}_\psi\left(\frac{1}{\tau}\right),1\right), \tau>0,
\end{equation}
then for any sequence $c=(c_k)\in {\ell_\psi}$ the following inequality holds:
$$
(c\bar\otimes f)^*\cdot\chi_{(0,1)}\le \|c\|_{{\ell_\psi}}g.$$
\end{prop}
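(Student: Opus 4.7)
The plan is to replace the claimed pointwise bound by an equivalent ``horizontal'' inequality between distribution functions on $(0,1)$, and then to verify that equivalent inequality by a direct application of the definition of the dilation function $\mathcal{M}_\psi$. By the positive homogeneity in $c$ of both sides, it suffices to treat the case $\|c\|_{\ell_\psi}=1$, which gives $\sum_k \psi(|c_k|)\le 1$; moreover, since $\psi$ is a bijection of $[0,1]$ onto $[0,1]$, each $|c_k|$ must lie in $[0,1]$ as well, so that every $\psi(|c_k|/\tau)$ below is meaningful.

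Next I would compute distribution functions explicitly. The function $f=1/\psi^{-1}$ is nonincreasing on $(0,1]$, and the inequality $f(t)>\tau$ is equivalent to $\psi^{-1}(t)<1/\tau$, so $n_f(\tau)=\min(\psi(1/\tau),1)$. Combining this with the additivity formula $n_{a\bar\otimes x}(\tau)=\sum_k n_{a_k x}(\tau)$ recalled just before the proposition yields
\begin{equation*}
n_{c\bar\otimes f}(\tau) \;=\; \sum_k \min\bigl(\psi(|c_k|/\tau),\,1\bigr), \qquad \tau>0.
\end{equation*}
Because $g$ and $(c\bar\otimes f)^*$ are both nonincreasing on $(0,1]$ and $g=g^*$, the claim $(c\bar\otimes f)^*\chi_{(0,1)}\le g$ is equivalent to
\begin{equation*}
\min\bigl(n_{c\bar\otimes f}(\tau),\,1\bigr) \;\le\; n_g(\tau) \;=\; \min\bigl(\mathcal{M}_\psi(1/\tau),\,1\bigr), \qquad \tau>0.
\end{equation*}

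The main (and essentially only) nontrivial step is to verify this distributional comparison. When $\mathcal{M}_\psi(1/\tau)\ge 1$ the right-hand side equals $1$ and there is nothing to prove. When $\mathcal{M}_\psi(1/\tau)<1$ one necessarily has $\tau>1$ (otherwise the substitution $u=s/\tau$ in the definition of $\mathcal{M}_\psi$ shows $\mathcal{M}_\psi(1/\tau)\ge 1$), so that $\min(1,\tau)=1$ and $\mathcal{M}_\psi(1/\tau)=\sup_{0<s\le 1}\psi(s/\tau)/\psi(s)$. Applying this supremum at $s=|c_k|\in(0,1]$ gives $\psi(|c_k|/\tau)\le \mathcal{M}_\psi(1/\tau)\,\psi(|c_k|)$, and summing over $k$ together with the normalization yields
\begin{equation*}
n_{c\bar\otimes f}(\tau) \;\le\; \sum_k \psi(|c_k|/\tau) \;\le\; \mathcal{M}_\psi(1/\tau)\sum_k \psi(|c_k|) \;\le\; \mathcal{M}_\psi(1/\tau),
\end{equation*}
which is exactly what is needed.

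I do not foresee a genuine obstacle; the argument is a chain of definition-unpackings. The two points requiring the most care are (i) that the truncation to $(0,1)$ on the left is faithfully matched by the outer $\min(\cdot,1)$ in $n_g$, so that the horizontal comparison is really equivalent to the pointwise bound, and (ii) that the normalization $\|c\|_{\ell_\psi}=1$ places every $|c_k|$ inside the very interval $(0,1]$ on which $\mathcal{M}_\psi(1/\tau)$ takes its supremum, allowing the dilation inequality to be invoked at $s=|c_k|$ with no further adjustment.
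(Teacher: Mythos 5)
Your argument is correct and follows essentially the same route as the paper's proof: normalize to $\|c\|_{\ell_\psi}=1$ to get $|c_k|\le 1$, compute $n_{c\bar\otimes f}(\tau)=\sum_k\psi(|c_k|/\tau)$ for $\tau\ge 1$, bound it by $\mathcal{M}_\psi(1/\tau)\sum_k\psi(|c_k|)\le n_g(\tau)$ via the definition of the dilation function, and convert the distributional inequality into the pointwise one using monotonicity (the paper phrases this last step as an inclusion of level sets, and notes $n_g(1)=1$ to reduce to $\tau>1$, which is the same observation as your remark that $\mathcal{M}_\psi(1/\tau)<1$ forces $\tau>1$). No gaps.
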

\begin{proof}
Without loss of generality, we will further assume that $\|c\|_{\ell_\psi}=1.$

Firstly, we observe that, thanks to the properties of $\psi$, the function from the right-hand side of equality \eqref{distribution2} is nonnegative, continuous and nonincreasing. Moreover, it does not exceed $1$ and tends to zero as $\tau$ tends to infinity. Therefore, there exists a nonincreasing function $g:\,(0,1]\to [0,\infty)$ that satisfies \eqref{distribution2}.

Since $\psi$ does not decrease and $\psi(0)=0$, we have for each $\tau\ge 1$:
$$
n_f(\tau) = m\left\{ u\in (0,1] :\, \frac{1}{\psi^{-1}(u)} > \tau\right\} = m\left\{ u\in (0,1] :\, \psi \left(\frac{1}{\tau}\right) > u\right\}= \psi\left(\frac{1}{\tau}\right).
$$	
Therefore, in view of the definition of the function $c \bar\otimes f$ (see \S\,\ref{aux1}),
\begin{equation}\label{distribution3}
n_{c \bar\otimes f} (\tau) = \sum_{k=1}^\infty n_{c_k f} (\tau) = \sum_{k=1}^\infty \psi\left(\frac{|c_k|}{\tau}\right).
\end{equation}

In addition, since $\|c\|_{\ell_\psi} = 1$, then for any $k=1,2,\dots$
$$
\psi(|c_k|) \leq \sum_{i=1}^\infty \psi(|c_i|)=1= \psi(1).$$
Taking into account the monotonicity of $\psi$ once more, we obtain then that $|c_k| \leq 1$ for all $k=1,2,\dots$. Hence, by the definition of the function ${\mathcal{M}}_\psi$, we have  for each $\tau \ge 1$ and all $k=1,2,\dots$:
$$
\psi\left(\frac{|c_k|}{\tau}\right)\leq \psi(|c_k|){\mathcal{M}}_\psi\left(\frac{1}{\tau}\right).$$
Thus, since $\|c\|_{\ell_\psi}=1$ and $\psi$ increases, from \eqref{distribution2} and \eqref{distribution3} it follows
\begin{eqnarray}
n_{c \bar\otimes f} (\tau) \leq {\mathcal{M}}_\psi\left(\frac{1}{\tau}\right) \sum_{k=1}^\infty \psi(|c_k|)\leq {\mathcal{M}}_\psi\left(\frac{1}{\tau}\right)=n_g(\tau),\;\;\tau\ge 1.
\label{distr ineq}
\end{eqnarray}

Now, let us check that for each $s \in (0,1)$ it holds
\begin{equation}
\label{embedd}
\{ \tau>0:\, n_g(\tau)\leq s\}\subset \{ \tau>0:\,n_{c \bar\otimes f} (\tau)\leq s\}.
\end{equation}
Indeed, $n_g(1)={\mathcal{M}}_\psi(1)=1$, whence $g(t)> 1$ a.e. on $(0,1]$. Hence, 
$$
\{ \tau>0:\, n_{g}(\tau)\leq s\}\subset (1,\infty),$$ 
and therefore, by \eqref{distr ineq}, the inequality $n_{g}(\tau)\leq s$ implies that $n_{c \bar\otimes f}(\tau)\leq s.$
Thus, the embedding \eqref{embedd} is proved.

Since $g$ does not increase, then, by the definition of the nonincreasing rearrangement, from \eqref{embedd} it follows
$$
(c\bar\otimes f)^*\cdot \chi_{(0,1)} \leq g,$$
which completes the proof.
\end{proof}

\begin{rem}
Suppose that the function ${\mathcal{M}}_\psi(t)$ strictly increases on $(0,1]$. Then, as is easy to check, the function $g$ defined by  \eqref{distribution2} coincides with the inverse function ${\mathcal{M}}_\psi^{-1}(t)$.
\end{rem}

From Proposition \ref{prop 1dop} and the definition of a symmetric space it follows

\begin{cor}
\label{cor: new10}
Let $\psi:\,[0,1]\to [0,1]$ be an increasing, continuous function, $\psi(0)=0$, $\psi(1)=1$, $f(t): =1/\psi^{-1}(t)$, $0<t\le 1$, and $g$ be a nonincreasing, nonnegative function on $(0,1]$ such that
its distribution function $n_g(\tau)$ is defined by  \eqref{distribution2}. If $X$ is a symmetric space on $[0,1]$ such that $g\in X$, then for any sequence $c=(c_k)\in {\ell_\psi}$ we have
$$
\|(c\otimes f)^*\cdot\chi_{(0,1)}\|_X\le \|g\|_X\|c\|_{{\ell_\psi}}.$$
\end{cor}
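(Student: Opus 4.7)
The plan is straightforward because this corollary is essentially a direct consequence of Proposition \ref{prop 1dop} combined with the Banach lattice structure that every symmetric space carries (as recalled in \S\,\ref{prel1}). In particular, the work of producing the majorizing function $g$ and controlling $n_{c\bar\otimes f}(\tau)$ has already been done in the preceding proposition; here one just needs to pass from a pointwise majorization to a norm inequality.

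First I would apply Proposition \ref{prop 1dop} to the given sequence $c=(c_k)\in \ell_\psi$ to obtain the pointwise estimate
$$
(c\bar\otimes f)^*(t)\,\chi_{(0,1)}(t)\le \|c\|_{\ell_\psi}\,g(t),\quad 0<t<1.
$$
Both sides are nonnegative functions on $(0,1)$, and both belong to $X$: the right-hand side by the standing assumption $g\in X$, and the left-hand side because it is dominated by a function in $X$.

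Next I would invoke the lattice property of $X$: since $X$ is a Banach lattice of measurable functions on $[0,1]$, the pointwise a.e.\ inequality $|(c\bar\otimes f)^*\chi_{(0,1)}|\le \|c\|_{\ell_\psi}\,|g|$ implies the norm inequality
$$
\|(c\bar\otimes f)^*\chi_{(0,1)}\|_X\le \|c\|_{\ell_\psi}\,\|g\|_X,
$$
which is exactly the conclusion sought (reading $c\otimes f$ in the statement as $c\bar\otimes f$, in agreement with the notation fixed in \S\,\ref{aux1}).

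There is no substantive obstacle: the only point requiring care is the implicit assertion that $(c\bar\otimes f)^*\chi_{(0,1)}$ is itself an element of $X$, but this is automatic from the fact that $g\in X$ together with the monotonicity of $\|\cdot\|_X$. Thus the proof reduces to quoting Proposition \ref{prop 1dop} and applying the definition of a symmetric space, and the whole argument can be presented in essentially two lines.
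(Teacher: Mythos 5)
Your proposal is correct and coincides with the paper's own (essentially one-line) argument: the paper derives the corollary directly from Proposition \ref{prop 1dop} together with the Banach lattice property of a symmetric space, exactly as you do. Your remark that $c\otimes f$ in the statement is to be read as $c\bar\otimes f$ is also consistent with the paper's notation.
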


Further, we will need the following technical lemma.

\begin{lemma}\label{lemma 3dop}
If a function $\psi:\,[0,1]\to [0,1]$ increases, $\psi(0)=0$, $\psi(1)=1$, and $h(t)={\mathcal M}_{1/\psi^{-1}}(t)$, $0<t\le 1$,  then
$$
n_h(\tau)\ge \min\left({\mathcal{M}}_\psi\left(\frac{1}{\tau}\right),1\right), \tau>0.$$
\end{lemma}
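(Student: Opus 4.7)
The plan is to verify the inequality by testing the supremum in the definition of $h(t)$ at a carefully chosen admissible point $s$. Since $\min(1,1/t)=1$ for $t\in(0,1]$, we have $h(t)=\sup_{0<s\le 1}\psi^{-1}(s)/\psi^{-1}(st)$, and I would evaluate this supremum at $s=\psi(s_0)$ for an arbitrary parameter $s_0\in(0,\min(1,\tau)]$.

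Concretely, I would fix $\tau>0$ and such an $s_0$, set $A:=\psi(s_0/\tau)/\psi(s_0)$, and show that every $t\in(0,\min(A,1))$ satisfies $h(t)>\tau$. For such $t$, the choice $s=\psi(s_0)$ is admissible in the sup defining $h(t)$, since $\psi(s_0)\le\psi(1)=1\le 1/t$, and therefore
$$h(t)\ge\frac{\psi^{-1}(\psi(s_0))}{\psi^{-1}(\psi(s_0)t)}=\frac{s_0}{\psi^{-1}(\psi(s_0)t)}.$$
From $t<A$ one deduces $\psi(s_0)t<\psi(s_0)A=\psi(s_0/\tau)$, and applying the monotone function $\psi^{-1}$ yields $\psi^{-1}(\psi(s_0)t)<s_0/\tau$, whence $h(t)>\tau$ as required. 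This gives the inclusion $(0,\min(A,1))\subseteq\{t\in(0,1]:h(t)>\tau\}$, so $n_h(\tau)\ge\min(A,1)$.

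Finally, I would take the supremum over $s_0\in(0,\min(1,\tau)]$ and use the elementary identity $\sup_\alpha\min(\alpha,1)=\min(\sup_\alpha\alpha,\,1)$ to obtain
$$n_h(\tau)\ge\min\Big(\sup_{0<s_0\le\min(1,\tau)}\frac{\psi(s_0/\tau)}{\psi(s_0)},\,1\Big)=\min\big({\mathcal M}_\psi(1/\tau),\,1\big),$$
which is the desired conclusion.

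The argument is essentially a short manipulation of the definitions, so no real obstacle is anticipated. The only points requiring some care are the admissibility check $\psi(s_0)\le\min(1,1/t)$ (immediate from $s_0\le 1$, $\psi(1)=1$, and $t\le 1$) and the strict monotonicity of $\psi^{-1}$ used in the last step, which is in force under the (implicit) continuity and strict monotonicity of $\psi$ assumed throughout \S\,\ref{aux1}.
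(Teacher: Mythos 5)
Your proof is correct and is essentially the same argument as the paper's: both establish that $h(t)>\tau$ whenever $t<\mathcal{M}_\psi(1/\tau)$ by testing the supremum defining $h$ at a witness of the form $s=\psi(s_0)$, which is precisely the paper's change of variables $v=\psi^{-1}(u)$. Your version merely packages it more directly (parametrizing by $s_0$ and using $\sup_\alpha\min(\alpha,1)=\min(\sup_\alpha\alpha,1)$ instead of the paper's $\varepsilon$-argument and monotonicity of $h$), which is a cosmetic rather than substantive difference.
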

\begin{proof}
Since $\psi$ increases, ${\mathcal{M}}_\psi(1)=1$, and $h$ does not increase, it suffices to show that for any $\tau\ge 1$ and arbitrarily small $\varepsilon>0$ it holds
\begin{equation}
\label{tech in}
h\left({\mathcal{M}}_\psi\left(\frac{1}{\tau}\right)-\varepsilon\right)>\tau.
\end{equation}

Denote $t:={\mathcal{M}}_\psi({1}/{\tau})-\varepsilon$. By the definition of $h$, we have 
$$
h(t)=\sup_{0<s\le 1}\frac{\psi^{-1}(s)}{\psi^{-1}(st)}=\sup_{0<u\le t\le 1}\frac{\psi^{-1}(u/t)}{\psi^{-1}(u)}.$$
Thus, \eqref{tech in} holds if and only if there is $u>0$ such that $0<u\le t\le 1$ and
$$
\psi^{-1}(u/t)>\tau \psi^{-1}(u),$$ 
or equivalently
$$
u>t\psi(\tau \psi^{-1}(u)).$$
Note that $\tau \psi^{-1}(u)\le 1$. Therefore, after changing $\psi^{-1}(u)=v$ we obtain that the last inequality is valid if and only if
$$
{\mathcal{M}}_\psi\left(\frac{1}{\tau}\right):=\sup_{0<v\le 1}\frac{\psi(v/\tau)}{ \psi(v)}>t.$$
Since the choice of $t$ ensures that the latter is true, inequality  \eqref{tech in} and hence the lemma are proved.
\end{proof}

\begin{theor}
\label{theorem-main4}
Let $M$ be an Orlicz function such that $1<\alpha_M^\infty\le \beta_M^\infty<2$. Assume also that $f\in L_M$ and ${\mathcal{M}}_{f^*}\in L_M$. Then, if $\{f_k\}$ is a sequence of mean zero independent functions equimeasurable with $f$ and $[f_k]_{L_M }\approx\ell_\psi$, where $1<\alpha_\psi^0\le \beta_\psi^0<2$, then the unit ball of the subspace $[f_k]_{L_M}$ consists of functions with equicontinuous norms in $L_M $.
\end{theor}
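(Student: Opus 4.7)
The plan is to exhibit an Orlicz function $N$ with $\lim_{u\to\infty} N(u)/M(u) = \infty$ such that the unit ball $B_H$ of $H := [f_k]_{L_M}$ is uniformly bounded in $L_N$; the strict embedding $L_N \hookrightarrow L_M$ will then yield the desired equicontinuity (as in the proof of Proposition~\ref{prop 2a}, or directly by Lemma~3 from \cite{A-14}). Without loss of generality, assume $f = f^*$. From $[f_k]_{L_M} \approx \ell_\psi$, relation \eqref{Luxem2e} applied to $a = \sum_{k=1}^n e_k$, together with the elementary lower bound $\|\sigma_n f\|_{L_M} \ge f(1/n)$ (valid since $\sigma_n f(t) \ge f(1/n)$ for $t\in(0,1)$), produces the pointwise upper bound
$$f(t) \le C_1/\psi^{-1}(t), \qquad t \in (0, 1].$$

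Applying Proposition~\ref{prop 1dop} to $\tilde f := 1/\psi^{-1}$ and transferring via $f \le C_1 \tilde f$, we obtain, for every $c \in \ell_\psi$,
$$(c \bar\otimes f)^* \chi_{(0, 1)} \le C_2 \|c\|_{\ell_\psi}\, g,$$
where $g$ is nonincreasing with $n_g(\tau) = \min(\mathcal{M}_\psi(1/\tau), 1)$ and satisfies $g \le \mathcal{M}_{1/\psi^{-1}}$ pointwise by Lemma~\ref{lemma 3dop}. The critical ingredient is now $g \in L_M$. Under the hypothesis $\mathcal{M}_{f^*} \in L_M$ combined with the pointwise domination $f \le C_1/\psi^{-1}$ and the index restrictions $1 < \alpha_M^\infty \le \beta_M^\infty < 2$ and $1 < \alpha_\psi^0 \le \beta_\psi^0 < 2$, we first establish that $[f_k]$ is strongly embedded in $L_M$; this uses the control on the $L^2$-tail $\bigl(n\int_{1/n}^1 f^2\bigr)^{1/2}$ coming from \eqref{Luxem2e} together with Proposition~\ref{prop 1a}(i). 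Then Lemma~\ref{lemma 4dop} yields $f \asymp 1/\psi^{-1}$, so $\mathcal{M}_{1/\psi^{-1}} \asymp \mathcal{M}_{f^*} \in L_M$, and consequently $g \in L_M$.

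Finally, apply Lemma~\ref{lemma 3} (Vall\'{e}e Poussin) to the function $g \in L_M$, obtaining an Orlicz function $N$ with $N, \tilde N \in \Delta_2^\infty$, $N(u)/M(u) \to \infty$ as $u\to\infty$, and $g \in L_N$. For any $x = \sum a_k f_k \in B_H$ we have $\|a\|_{\ell_\psi} \le C_0 \|x\|_{L_M}\le C_0$, and the Johnson--Schechtman decomposition in $L_N$ gives
$$\|x\|_{L_N} \asymp \|(a \bar\otimes f)^* \chi_{[0, 1]}\|_{L_N} + \|(a \bar\otimes f)^* \chi_{[1, \infty)}\|_{L^2};$$
the first summand is bounded by $C_2 \|a\|_{\ell_\psi}\, \|g\|_{L_N}$ in view of the preceding display, while the second is controlled via the Johnson--Schechtman formula in $L_M$ by a constant multiple of $\|x\|_{L_M}$. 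Thus $B_H$ is uniformly bounded in $L_N$, and strictness of the embedding $L_N \hookrightarrow L_M$ completes the proof. The principal obstacle is the strong embedding step, i.e.\ passing from the one-sided domination $f \le C_1/\psi^{-1}$ to the two-sided equivalence $f \asymp 1/\psi^{-1}$ using only $\mathcal{M}_{f^*}\in L_M$ together with the index hypotheses.
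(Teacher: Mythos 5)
Your overall architecture coincides with the paper's: reduce everything to showing $g\in L_M$ (via Proposition \ref{prop 1dop} and Lemma \ref{lemma 3dop}), then invoke Lemma \ref{lemma 3} and the Johnson--Schechtman decomposition to bound $B_H$ in an Orlicz space $L_N$ with $N(u)/M(u)\to\infty$. However, there is a genuine gap at the step you yourself flag as ``the principal obstacle'': you never actually prove that $[f_k]$ is strongly embedded in $L_M$, and without that you cannot invoke Lemma \ref{lemma 4dop} to upgrade the one-sided bound $f\preceq 1/\psi^{-1}$ to the two-sided equivalence needed to conclude $\mathcal{M}_{1/\psi^{-1}}\preceq\mathcal{M}_{f^*}$ and hence $g\in L_M$. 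Your suggested route --- ``control on the $L^2$-tail coming from \eqref{Luxem2e} together with Proposition \ref{prop 1a}(i)'' --- points in the wrong direction: the hypothesis of Proposition \ref{prop 1a}(i) is the dilation inequality \eqref{general condition}, namely $\|\sigma_n f\|_{L_M}\preceq\|\sigma_n f\|_{L^1}$, and the $L^2$-tail term plays no role in verifying it.

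The missing argument is short and is exactly where the hypothesis $\mathcal{M}_{f^*}\in L_M$ enters for the first time. For $f=f^*$ one has $\sigma_{1/s}f(t)=f(st)\le \mathcal{M}_{f}(t)f(s)$ for $0<s,t\le 1$, whence
$$
\|\sigma_{1/s}f\|_{L_M}\le \|\mathcal{M}_{f}\|_{L_M}\,f(s)\le \|\mathcal{M}_{f}\|_{L_M}\cdot\frac1s\int_0^s f(u)\,du=\|\mathcal{M}_{f}\|_{L_M}\,\|\sigma_{1/s}f\|_{L^1},
$$
which is precisely \eqref{general condition}; Proposition \ref{prop 1a}(i) then gives the strong embedding, Lemma \ref{lemma 4dop} gives $f\asymp 1/\psi^{-1}$ for small $t$, and the rest of your argument goes through essentially as written. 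A minor further caution: the resulting equivalence holds only on some $(0,t_0]$, so you should deduce $\mathcal{M}_{1/\psi^{-1}}(t)\preceq\mathcal{M}_{f^*}(t)$ for $t\le t_0$ and handle $[t_0,1]$ by boundedness, rather than asserting a global two-sided equivalence of the two dilation functions.
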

\begin{proof}
Without loss of generality, we can assume that $f=f^*$.
Let us first prove that the subspace $[f_k]_{L_M}$ is strongly embedded in $L_M$.

From the definition of the dilation function ${\mathcal{M}}_f$ it follows 
$$
\sigma_{1/s}f(t)=f(st)\le {\mathcal{M}}_f (t)f(s),\;\;0<s,t\le 1.$$
Since ${\mathcal{M}}_f\in L_M$ by condition and $f$ is a nonnegative, nonincreasing function, then this inequality implies that for all $0<s\le 1$
$$
\|\sigma_{1/s}f\|_{L_M}\le \|{\mathcal{M}}_f\|_{L_M}f(s)\le \|{\mathcal{M}}_f\|_{L_M}\cdot \frac1s\int_0^s f(u)\,du=\|{\mathcal{M}}_f\|_{L_M}\|\sigma_{1/s}f\|_{L^1}.$$
Thus, applying Proposition \ref{prop 1a}(i), we obtain the required result.

Let us now proceed with the proof of the theorem. Since the subspace $[f_k]_{L_M}$ is strongly embedded in $L_M$, by Lemma \ref{lemma 4dop}, we have $n_f(\tau)\asymp n_{1/\psi^{-1}}(\tau)$ for large $\tau>0$. Therefore, since the functions $f$ and $1/\psi^{-1}$ do not increase and $\psi^{-1}(1)=1$, just as in the proof of Proposition 
\ref{prop 1a}(ii), for some $0<t_0\le 1$ we get
$$
f(t)\asymp 1/\psi^{-1}(t),\;0<t\le t_0,\;\;\mbox{and}\;\;f(t)\preceq 1/\psi^{-1}(t),\;0<t\le 1.$$
Consequently, 
$$
{\mathcal M}_{1/\psi^{-1}}(t)=\sup_{0<s\le 1}\frac{\psi^{-1}(s)}{\psi^{-1}(st)}\preceq \sup_{0<s\le 1}\frac{f(st)\psi^{-1}(s)f(s)}{f(s)}\preceq {\mathcal M}_{f}(t),\;\;0<t\le t_0.$$
Since the function ${\mathcal M}_{1/\psi^{-1}}$ does not increase and,  by condition, ${\mathcal M}_{f}\in L_M$, then from the latter inequality, Lemma \ref {lemma 3dop} and the definition of the function $g$ (see Proposition \ref{prop 1dop}) it follows that
$g$ belongs to the space $L_M$.

Next, by using Lemma \ref{lemma 3}, we find a function $N$ equivalent to some Orlicz function such that $N(1)=1,$ $N\in \Delta_2^\infty$, $\tilde{N} \in \Delta_2^\infty$, $\lim_{u\to\infty}{N(u)}/{M(u)}=\infty$ and $g\in L_N$. Assuming that $N$ is an Orlicz function itself, according to Corollary \ref{cor: new10}, we obtain for any sequence $c=(c_k)\in {\ell_\psi}$ 
$$
\|(c\bar\otimes f)^*\cdot\chi_{(0,1)}\|_{L_N}\le \|g\|_{L_N}\|c\|_{{\ell_\psi}}.$$
Since (see \S\,\ref{aux1})
$$
\Big\|\sum_{k=1}^\infty c_kf_k\Big\|_{L_N}\asymp \|(c\bar\otimes f)^*\cdot\chi_{(0,1)}\|_{L_N}+\|(c\bar\otimes f)^*\cdot\chi_{(1,\infty)}\|_{L^2},$$
$$
\Big\|\sum_{k=1}^\infty c_kf_k\Big\|_{L_M}\asymp \|(c\otimes f)^*\cdot\chi_{(0,1)}\|_{L_M}+\|(c\otimes f)^*\cdot\chi_{(1,\infty)}\|_{L^2}\asymp\|c\|_{{\ell_\psi}}$$
and $L_N\subset L_M$, this implies that 
$$
\Big\|\sum_{k=1}^\infty c_kf_k\Big\|_{L_N}\asymp \|c\|_{{\ell_\psi}}.$$
As a result, to complete the proof it suffices to apply Vall\'{e}e Poussin's criterion \cite[Theorem~3.2]{LMT}.
\end{proof}

The next theorem gives simple sufficient conditions, under which  the unit ball of a strongly embedded subspace of $L_M$ spanned by independent copies of a mean zero function from $L_M$ consists of functions having equicontinuous norms in $L_M$.

\begin{theor}
\label{cor: new11}
Let $M$ be an Orlicz function such that $1<\alpha_M^\infty\le \beta_M^\infty<2$. Suppose that $\{f_k\}$ is a sequence of mean zero independent functions equimeasurable with a function $f\in L_M$ and $[f_k]\approx \ell_\psi$, where $1<\alpha_\psi^0\le \beta_\psi^0<2$. Assume also that the subspace $[f_k]$ is strongly embedded in $L_M$.

If there exists a function $\varphi\in C_{\psi,1}^0$ such that for some $C>0$ and all $s,t\in [0,1]$
\begin{equation}
\label{main submult}
\psi(st)\le C\psi(s)\varphi(t),
\end{equation}
then the unit ball of the subspace $[f_k]$ consists of functions with equicontinuous norms in $L_M$. In particular, this holds if at least one of the following conditions is fulfilled:

(a) $\psi$ is submultiplicative, i.e., there exists $C>0$ such that for all $s,t\in [0,1]$
$$
\psi(st)\le C\psi(s)\psi(t);$$

(b) $\psi$ is equivalent to some $\alpha_\psi^0$-convex function for small values of the argument;

(c) $t^{-1/p}\in L_M$ for some $p\in (0,\alpha_\psi^0)$.
\end{theor}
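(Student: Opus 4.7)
The plan is to reduce everything to Theorem~\ref{theorem-main4}, which under the present assumptions on $M$, $\psi$ and $[f_k]$ asks only for the additional inclusion ${\mathcal{M}}_{f^*}\in L_M$. Since $[f_k]$ is strongly embedded in $L_M$ and $1<\alpha_\psi^0\le\beta_\psi^0<2$, Lemma~\ref{lemma 4dop} yields $n_f(\tau)\asymp n_{1/\psi^{-1}}(\tau)$ for large $\tau$, i.e.\ $f^*(t)\asymp 1/\psi^{-1}(t)$ on some initial interval $(0,t_0]$. By splitting the supremum defining ${\mathcal{M}}_{f^*}(t)$ at $s=t_0$ and using the concavity of $\psi^{-1}$ on $[0,1]$ (which gives $\psi^{-1}(at)\ge a\psi^{-1}(t)$ for $a\in[0,1]$), the contribution of $s\in(t_0,1]$ is controlled by $C/\psi^{-1}(t)\le C\,{\mathcal{M}}_{1/\psi^{-1}}(t)$, so that ${\mathcal{M}}_{f^*}\preceq {\mathcal{M}}_{1/\psi^{-1}}$ pointwise on $(0,1]$. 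It therefore suffices to show that ${\mathcal{M}}_{1/\psi^{-1}}\in L_M$.

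The key calculation turns \eqref{main submult} into a pointwise bound on ${\mathcal{M}}_{1/\psi^{-1}}$. Setting $s=\psi^{-1}(u)$ for $u\in(0,1]$ in $\psi(st)\le C\psi(s)\varphi(t)$ one obtains $\psi^{-1}(u)\cdot t\le \psi^{-1}(Cu\varphi(t))$ for all $t\in(0,1]$. Putting $\tau:=C\varphi(t)$, i.e.\ $t=\varphi^{-1}(\tau/C)$, this rearranges to
\[
\frac{\psi^{-1}(u)}{\psi^{-1}(u\tau)}\le \frac{1}{\varphi^{-1}(\tau/C)},\qquad u\in(0,1],\ \tau\in(0,1],
\]
and taking the supremum in $u$ gives ${\mathcal{M}}_{1/\psi^{-1}}(\tau)\le 1/\varphi^{-1}(\tau/C)\asymp 1/\varphi^{-1}(\tau)$ (by concavity of $\varphi^{-1}$). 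Since $\varphi\in C_{\psi,1}^0$ and the remaining hypotheses of Lemma~\ref{prop1} are in force, $1/\varphi^{-1}\in L_M$; consequently ${\mathcal{M}}_{1/\psi^{-1}}\in L_M$, and Theorem~\ref{theorem-main4} finishes the proof of the general statement.

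For the three special conditions I would argue as follows. (a) Take $\varphi=\psi$: submultiplicativity is literally \eqref{main submult}, and $\psi\in C_{\psi,1}^0$ follows by letting $s\to 1^-$ in the ratios $\psi(st)/\psi(s)$ that generate $E_{\psi,1}^0$. (b) By Lemma~\ref{Lemma 20}(i), equivalence of $\psi$ to an $\alpha_\psi^0$-convex function for small arguments gives $\psi(st)\le Cs^{\alpha_\psi^0}\psi(t)$ on $(0,1]^2$; since $t^{\alpha_\psi^0}\in C_{\psi,1}^0$ (as $\alpha_\psi^0\in[\alpha_\psi^0,\beta_\psi^0]$, see \S\ref{prel2}), the choice $\varphi(t)=t^{\alpha_\psi^0}$ puts the inequality in the form \eqref{main submult} after a swap of variables (the estimate is symmetric in $s,t$). (c) Here I bypass the $\varphi$-route: for any $p<\alpha_\psi^0$ the definition of $\alpha_\psi^0$ yields $\psi(st)\le Cs^p\psi(t)$ on $(0,1]^2$, and the same substitution $s=\psi^{-1}(u)$ as above produces ${\mathcal{M}}_{1/\psi^{-1}}(\tau)\preceq \tau^{-1/p}$; the hypothesis $t^{-1/p}\in L_M$ then gives ${\mathcal{M}}_{1/\psi^{-1}}\in L_M$ directly, and Theorem~\ref{theorem-main4} applies.

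The one step I expect to be the main obstacle is the tail bookkeeping in ${\mathcal{M}}_{f^*}\preceq {\mathcal{M}}_{1/\psi^{-1}}$, because the equivalence $f^*\asymp 1/\psi^{-1}$ supplied by Lemma~\ref{lemma 4dop} is valid only on $(0,t_0]$; one must ensure that the supremum over $s\in(t_0,1]$, where $f^*(st)$ may still become large for small $t$, does not destroy $L_M$-integrability. The concavity of $\psi^{-1}$ together with the monotonicity of $f^*$ should suffice, but this is the only step not already packaged by the earlier lemmas of the paper.
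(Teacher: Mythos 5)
Your proposal is correct and follows essentially the same route as the paper: reduce to Theorem \ref{theorem-main4} by showing ${\mathcal{M}}_{f^*}\in L_M$, use Lemma \ref{lemma 4dop} to get $f^*\asymp 1/\psi^{-1}$ near zero, convert \eqref{main submult} into the bound ${\mathcal{M}}_{1/\psi^{-1}}(\tau)\preceq 1/\varphi^{-1}(\tau)$ (the paper writes the same inequality in the equivalent inverse-function form), and invoke Lemma \ref{prop1} for $1/\varphi^{-1}\in L_M$, with the three special cases handled identically. Your explicit treatment of the tail $s\in(t_0,1]$ via concavity of $\psi^{-1}$ and monotonicity of $f^*$ is sound and in fact spells out a step the paper leaves implicit.
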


\begin{proof}
It is obvious that inequality \eqref{main submult} holds if and only if
\begin{equation}
\label{tech}
\psi^{-1}(t/s){\varphi^{-1}(s)}\le C_1 {\psi^{-1}(t)}
\end{equation}
for some $C_1>0$ and all $0<t\le s\leq 1$. 
Hence,
$$
{\mathcal{M}}_{\psi^{-1}}(1/s)=\sup_{0\le t\le s}\frac{\psi^{-1}(t/s)}{\psi^{-1}(t)}\le C_1\cdot \frac{1}{\varphi^{-1}(s)},\;\;0<s\le 1.$$
Since the subspace $[f_k]$ is strongly embedded in $L_M$, $[f_k]\approx\ell_\psi$ and $\varphi\in C_{\psi,1}^0$, by Lemma 
\ref{prop1}, the function $1/\varphi^{-1}$ belongs to the space $L_M$. Therefore, from the latter inequality it follows that ${\mathcal{M}}_{\psi^{-1}}(1/s)\in L_M$.

On the other hand, by Lemma \ref{lemma 4dop}, the distribution functions $n_f(\tau)$ and $n_{1/\psi^{-1}}(\tau)$ are equivalent for large $\tau>0$. Therefore, as above, the functions $f^*(t)$ and $1/\psi^{-1}(t)$ are equivalent for small $t>0$, and, thanks to the equality $\psi^{-1} (1)=1$, we obtain that for some $C>0$ and all $0<s\leq 1$
\begin{equation}
\label{tech1}
{\mathcal{M}}_{f^*}(s)\le C{\mathcal{M}}_{1/\psi^{-1}}(s)= C{\mathcal{M}}_{\psi^{-1}}(1/s).
\end{equation}
Thus, ${\mathcal{M}}_{f^*}\in L_M$ and for completing the proof of the first statement of the theorem, it remains to apply Theorem \ref{theorem-main4}. Let us show that the left statements of the theorem are   consequences of the first one.

Indeed, to get the result in the case $(a)$, we need only to note that $\psi\in C_{\psi,1}^0$ by the assumption. Further, according to Lemma \ref{Lemma 20}, the function $\psi$ is equivalent to some $p$-convex function for small values of the argument if and only if
for some $C_2>0$ and all $0<t,s\leq 1$ the following holds:
\begin{equation}
\label{tech2}
\psi(st)\le C_2 s^{p}\psi(t).
\end{equation}
Therefore, if $(b)$ is satisfied, then the desired statement is an immediate consequence of the fact that the function
$t^{\alpha_\psi^0}$ belongs to the set $C_{\psi,1}^0$ (see \S\,\ref{prel2}).

Finally, in view of the definition of the index $\alpha_\psi^0$, for each $p\in (0,\alpha_\psi^0)$ the function $\psi$ is equivalent to some $p$-convex function for small values of the argument, whence for such $p$ the inequality \eqref{tech2} holds. Thus, the desired result follows from the condition $(c)$, and hence the proof of the theorem is completed.
\end{proof}

\begin{rem}
In general, Theorem \ref{cor: new11} cannot be extended to the whole class of subspaces of an Orlicz space $L_M$ that are isomorphic to some  Orlicz sequence spaces. As it is shown in \cite{A-22} (see Theorem~2 and its proof), if the function $t^{-1/\beta_M^\infty}\in L_M$, then $L_M$ contains a strongly embedded subspace $H$ of such a type, whose unit ball consists of functions with non-equicontinuous norms in $L_M$.
\end{rem}

\vskip0.2cm

\subsection{Subspaces of Orlicz spaces generated by mean zero identically distributed  independent functions and Matuszewska-Orlicz indices.}
\label{main1}
In the case, when $t^{-1/\beta_M^\infty}\not\in L_M$ (in particular, this condition is satisfied by $L^p$), all subspaces under consideration, which are strongly embedded in the Orlicz space $L_M$, can be characterized by using the Matuszewska-Orlicz indices of the corresponding functions. Moreover, the same condition is equivalent to the fact that the unit ball of such a subspace consists of functions with equicontinuous $L_M$-norms.

\begin{theor}
\label{new theorem}
Let $M$ be an Orlicz function such that $1<\alpha_M^\infty\le \beta_M^\infty<2$ and $t^{-1/\beta_M^\infty}\not\in L_M$. If $f\in L_M$ and $\{f_k\}$ is a sequence of mean zero independent functions equimeasurable with $f$, then the following conditions are equivalent:

(a) the unit ball of the subspace $[f_k]$ consists of functions with  equicontinuous norms in $L_M$;

(b) the subspace $[f_k]$ is strongly embedded in $L_M$;

(c) $\alpha_\psi^0>\beta_M^\infty$, where the Orlicz function $\psi$ is such that $[f_k]_{L_M}\approx\ell_\psi$.
\end{theor}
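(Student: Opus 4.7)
The plan is to establish the cyclic implications $(a) \Rightarrow (b) \Rightarrow (c) \Rightarrow (a)$, assembling the earlier results of the paper.

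The implication $(a) \Rightarrow (b)$ is immediate from Lemma~\ref{prop 1ab}.

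For $(b) \Rightarrow (c)$, I would argue by cases on $\alpha_\psi^0$. If $\alpha_\psi^0 = 2$, then $\alpha_\psi^0 > \beta_M^\infty$ follows at once from the standing hypothesis $\beta_M^\infty < 2$. If instead $\alpha_\psi^0 < 2$, the power function $\varphi(t) := t^{\alpha_\psi^0}$ belongs to $C_{\psi,1}^0$ (since $\ell^{\alpha_\psi^0}$ embeds isomorphically into $\ell_\psi$ by \cite[Theorem~4.a.9]{LT77}), so $L_M$ contains a strongly embedded subspace isomorphic to $\ell_\varphi$. Because $1 < \alpha_\varphi^0 = \beta_\varphi^0 = \alpha_\psi^0 < 2$ (with $\alpha_\psi^0 > 1$ forced by reflexivity of $L_M$, exactly as in the proof of Lemma~\ref{prop1}), one may invoke \cite[Corollary~3.3]{A-16} to conclude that $t^{-1/\alpha_\psi^0} = 1/\varphi^{-1}(t) \in L_M$; the hypothesis $t^{-1/\beta_M^\infty} \notin L_M$ then forces $\alpha_\psi^0 > \beta_M^\infty$.

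For $(c) \Rightarrow (a)$, I would first derive (b) and then apply Theorem~\ref{cor: new11}(c). Choose $p \in (\beta_M^\infty, \alpha_\psi^0)$: by the definition of $\beta_M^\infty$ there exists $q \in (\beta_M^\infty, p)$ with $M(u) \le C u^q$ for $u \ge 1$, and the substitution $u = t^{-1/p}$ gives $\int_0^1 M(t^{-1/p})\,dt < \infty$, so $t^{-1/p} \in L_M$. To establish (b), I would verify the dilation inequality $\|\sigma_n f\|_{L_M} \preceq \|\sigma_n f\|_{L^1}$ required by Proposition~\ref{prop 1a}(i): the Johnson--Schechtman identity \eqref{Luxem2} combined with $\alpha_\psi^0 > p$ and Lemma~\ref{Lemma 20} yields the upper estimate $\|\sigma_n f\|_{L_M} \le C/\psi^{-1}(1/n) \le C n^{1/p}$, while a matching lower bound of the same order on $\|\sigma_n f\|_{L^1}$ is obtained from the $L^1$-analog of \eqref{Luxem2} together with the fact that the Orlicz function controlling $[f_k]_{L^1}$ enjoys the same lower index bound. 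Once (b) is in hand, Theorem~\ref{cor: new11}(c) applied to this $p$ delivers (a).

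The principal obstacle is the verification of the dilation inequality in the step $(c) \Rightarrow (b)$: the natural route through Lemma~\ref{lemma 4dop}, which would supply the asymptotic $f^* \asymp 1/\psi^{-1}$, already presupposes strong embedding, so the required two-sided comparison must be extracted directly from the Johnson--Schechtman identity and the index information $\alpha_\psi^0 > p > \beta_M^\infty$.
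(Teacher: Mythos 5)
Your implications $(a)\Rightarrow(b)$ (Lemma \ref{prop 1ab}) and $(b)\Rightarrow(c)$ (the case split on $\alpha_\psi^0$ and the argument of Lemma \ref{prop1}) coincide with the paper's proof and are fine. The gap is in $(c)\Rightarrow(a)$, and it is precisely the obstacle you flag at the end without resolving. Your plan is to first establish $(b)$ by verifying $\|\sigma_n f\|_{L_M}\preceq\|\sigma_n f\|_{L^1}$ via an upper bound $\|\sigma_n f\|_{L_M}\preceq 1/\psi^{-1}(1/n)\preceq n^{1/p}$ and a ``matching lower bound of the same order'' on $\|\sigma_n f\|_{L^1}$. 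No such lower bound exists: if $f$ is a bounded mean zero function, then $\|\sigma_n f\|_{L^1}=n\int_0^{1/n}f^*(s)\,ds\asymp 1$ for all $n$, while $n^{1/p}\to\infty$; yet here $\psi(u)\asymp u^2$ near zero, so $(c)$ holds and the conclusion of the theorem is true. More structurally, the $L^1$-analogue of \eqref{Luxem2} only controls the \emph{sum} $\|\sigma_n f\|_{L^1}+\bigl(n\int_{1/n}^1 f^2\bigr)^{1/2}$ from below, and the $L^2$-tail term may dominate, so no lower bound on $\|\sigma_n f\|_{L^1}$ alone can be extracted from it; and the index information $\alpha_\theta^0>p$ yields an upper, not a lower, estimate for $1/\theta^{-1}(1/n)$. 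A second problem with this route is that Theorem \ref{cor: new11} requires $1<\alpha_\psi^0\le\beta_\psi^0<2$, and $\beta_\psi^0<2$ is not among the hypotheses of the present theorem (again, bounded $f$ gives $\beta_\psi^0=2$).

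The paper avoids both difficulties by not passing through $(b)$ at all. Fix $p\in(\beta_M^\infty,\alpha_\psi^0)$. From \eqref{Luxem2} alone (no strong embedding needed) one gets the one-sided pointwise bound $f^*(t)\le\frac1t\int_0^t f^*= \|\sigma_{1/t}f\|_{L^1}\le\|\sigma_{1/t}f\|_{L_M}\preceq 1/\psi^{-1}(t)$. Since $\alpha_\psi^0>p$, Lemma \ref{Lemma 20} makes $\psi$ equivalent to a $(p+\varepsilon)$-convex function near zero, and \cite[Proposition~3.1]{ASZ-15} gives $\|\sigma_{1/t}(1/\psi^{-1})\|_{L^p}\preceq 1/\psi^{-1}(t)$; combining, $\|\sigma_n f\|_{L^p}\preceq 1/\psi^{-1}(1/n)$, whence $[f_k]_{L^p}\approx\ell_\psi\approx[f_k]_{L_M}$ and $B_H$ is bounded in $L^p$. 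Since $u^p/M(u)\to\infty$, the Vall\'ee Poussin criterion yields $(a)$ directly. If you want to salvage your architecture, you must replace the false $L^1$ lower bound by this $L^p$ argument (or something equivalent); as written, the step $(c)\Rightarrow(b)$ is not proved.
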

\begin{proof}
As above, we can assume that $f=f^*$.

The implication $(a)\Rightarrow (b)$ is a consequence of Lemma \ref{prop 1ab}. As for the implication $(b)\Rightarrow (c)$, it is obvious if $\alpha_\psi^0\ge 2$. In the case when $\alpha_\psi^0<2$, it follows from Lemma \ref{prop1} (see also its proof). Thus, it remains only to show that $(c)$ implies $(a)$.

So, let $\alpha_\psi^0>\beta_M^\infty$. Also, assume that $p\in (\beta_M^\infty,\alpha_\psi^0)$. Then, from the definition of the index $\beta_M^\infty$ it follows  
\begin{equation}
\label{eq-new0}
\lim_{u\to\infty}\frac{u^p}{M(u)}=\infty.
\end{equation}

To prove statement $(a)$ it suffices to show that the norms of the spaces $L_M$ and $L^p$ are equivalent on $H$, or it is the same, to check that $f\in L^p$ and $[f_k]_{L^p }\approx\ell_\psi$. Indeed, then the unit ball $B_H$ of the subspace $H:=[f_k]_{L_M}$ is bounded in $L^p$, and therefore, by \eqref{eq-new0}, according to the Vall\'{e}e Poussin criterion (see, e.g., \cite[theorem~ 3.2]{LMT}), the set $B_H$ consists of functions having equicontinuous norms in $L_M$, i.e., $(a)$ is done.

First of all, due to the inequality $\alpha_\psi^0>p$ and Lemma \ref{Lemma 20}, the function $\psi$ is equivalent to some 
$(p+\varepsilon)$-convex function for small values of the argument whenever $\varepsilon>0$ is sufficiently small. Therefore, ${1}/{\psi^{-1}}\in L^p$ and, applying \cite[Proposition~3.1]{ASZ-15}, we obtain that
$$
\|\sigma_{1/t}(1/\psi^{-1})\|_{L^p}=\Big(\frac1t\int_0^t \frac{ds}{(\psi^{-1}(s))^p}\Big)^{1/p}\preceq \frac{1}{\psi^{-1}(t)},\;\;0<t\le 1.$$
Next, since $f(t)$ does not increase, $L_M\subset L^1$ and $[f_k]_{L_M}\approx\ell_\psi$, from \eqref{Luxem2} it follows
$$
f(t)\le \frac1t\int_0^t f(s)\,ds=\|\sigma_{1/t}f\|_{L^1}\le\|\sigma_{1/t}f\|_{L_M}\preceq  \frac{1}{\psi^{-1}(t)},\;\;0<t\le 1.$$
Therefore, in particular, $f\in L^p$. In addition, the last relations and \eqref{Luxem2} imply the estimate
\begin{eqnarray*}
\|\sigma_nf\|_{L^p}+\Big(n\int_{1/n}^1f(s)^2\,ds\Big)^{1/2}&\preceq& \|\sigma_n(1/\psi^{-1})\|_{L^p}+\Big(n\int_{1/n}^1f(s)^2\,ds\Big)^{1/2}\\&\preceq& \frac{1}{\psi^{-1}(1/n)},\;\;n\in\mathbb{N}.
\end{eqnarray*}
In view of the embedding $L^p\subset L_M$ and relation \eqref{Luxem2} once again, we obtain also the opposite inequality, i.e.,
\begin{equation*}
\frac{1}{\psi^{-1}(1/n)}\asymp \|\sigma_nf\|_{L^p}+\Big(n\int_{1/n}^1f(s)^2\,ds\Big)^{1/2},\;\;n\in\mathbb{N}.
\end{equation*}
Thus, $[f_k]_{L^p}\approx \ell_\psi$, and the theorem is proved.

\end{proof}

\begin{rem}
The condition $t^{-1/\beta_M^\infty}\not\in L_M$ is used only in the proof of the implication $(b)\Rightarrow (c)$ (when applying Lemma \ref{prop1}). Hence, the implication $(c)\Rightarrow (a)$ holds for any Orlicz space $L_M$ such that $1<\alpha_M^\infty\le \beta_M^\infty<2$.
\end{rem}

\begin{rem}
Let us assume that an Orlicz function $M$ satisfies the conditions of Theorem \ref{new theorem}. As proven in \cite[Theorem~3]{A-22}, the equivalence of the conditions $(a)$ and $(b)$ holds for {\it all} subspaces of $L_M$, which are isomorphic to Orlicz sequence spaces.
\end{rem}

In particular, for $L^p$-spaces from the last theorem and its proof we get the following complement to Rosenthal's theorem (see 
\S\,\ref{Intro}). 

\begin{cor}
\label{cor: new1}
Let $1<p<2$, $f\in L^p$ and $\{f_k\}$ be a sequence of mean zero independent functions equimeasurable with $f$ such that $[f_k]_{L^p}\approx\ell_\psi$. The following conditions are equivalent:

(a) $[f_k]_{L^p}$ is a $\Lambda(p)$-space;

(b)  $[f_k]_{L^p}$ is a $\Lambda(q)$-space for some $q>p$;

(c) $\alpha_\psi^0>p$.
\end{cor}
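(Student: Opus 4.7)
The plan is to specialise Theorem \ref{new theorem} to $M(u)=u^p$. First I verify the hypotheses: for $1<p<2$ one has $\alpha_M^\infty=\beta_M^\infty=p$, so $1<\alpha_M^\infty\le\beta_M^\infty<2$, and $\int_0^1 t^{-1}\,dt=\infty$ gives $t^{-1/\beta_M^\infty}=t^{-1/p}\notin L^p$, while the assumption $f\in L^p$ supplies $f\in L_M$. By definition a $\Lambda(p)$-subspace of $L^p$ is exactly a subspace on which convergence in the $L^p$-norm is equivalent to convergence in measure, i.e.\ condition $(b)$ of Theorem \ref{new theorem}; and condition $(c)$ of the corollary coincides verbatim with condition $(c)$ of that theorem. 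Thus the equivalence $(a)\Leftrightarrow(c)$ of the corollary is just the equivalence $(b)\Leftrightarrow(c)$ of Theorem \ref{new theorem} in the present setting.

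The remaining content of the corollary is $(a)\Leftrightarrow(b)$. The direction $(b)\Rightarrow(a)$ is immediate from the continuous inclusion $L^q\hookrightarrow L^p$ on $[0,1]$ (with $\|\cdot\|_{L^p}\le\|\cdot\|_{L^q}$): on a $\Lambda(q)$-subspace $H$ convergence in measure implies $L^q$-convergence, hence $L^p$-convergence, so $H$ is $\Lambda(p)$.

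For $(a)\Rightarrow(b)$, using the already-established $(a)\Rightarrow(c)$ I have $\alpha_\psi^0>p$, so I pick any $q\in(p,\alpha_\psi^0)$ and replay, with this $q$ in place of the auxiliary $p$, the chain of estimates carried out in the proof of the implication $(c)\Rightarrow(a)$ of Theorem \ref{new theorem}. Concretely, Lemma \ref{Lemma 20} gives $(q+\varepsilon)$-convexity of $\psi$ near $0$ for some $\varepsilon>0$; Proposition~3.1 of \cite{ASZ-15} then furnishes $\|\sigma_{1/t}(1/\psi^{-1})\|_{L^q}\preceq 1/\psi^{-1}(t)$ on $(0,1]$; and the bound $f^*(t)\preceq 1/\psi^{-1}(t)$, which follows from Lemma \ref{lemma 4dop} (applicable because $[f_k]_{L^p}\approx \ell_\psi$ is strongly embedded by $(a)$) exactly as in the proof of Theorem \ref{new theorem}, together with the representation \eqref{Luxem2} taken in the space $L^q$, yields $f\in L^q$ together with $[f_k]_{L^q}\approx \ell_\psi$. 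Consequently the $L^p$- and $L^q$-norms are equivalent on the linear span of $\{f_k\}$, hence $[f_k]_{L^p}=[f_k]_{L^q}$ as closed subspaces (recall $L^q\subset L^p$), and the strong embedding of $[f_k]$ in $L^p$ transfers automatically to a strong embedding in $L^q$, i.e.\ $(b)$ holds.

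The only non-routine ingredient is the transfer of the isomorphism $[f_k]\approx\ell_\psi$ from the $L^p$-setting to the $L^q$-setting required in $(a)\Rightarrow(b)$; this is however exactly the computation already performed inside Theorem \ref{new theorem} and introduces no new difficulty, so the corollary is in effect a direct readout of that theorem in the power-function case.
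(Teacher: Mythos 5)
Your proposal is correct and follows essentially the same route the paper intends: the paper derives the corollary "from the last theorem and its proof," i.e., identifying condition (a) with condition (b) of Theorem \ref{new theorem} and condition (c) with its condition (c) for $M(u)=u^p$ (where $\alpha_M^\infty=\beta_M^\infty=p$ and $t^{-1/p}\notin L^p$), and obtaining (b) by rerunning the argument for $(c)\Rightarrow(a)$ with an exponent $q\in(p,\alpha_\psi^0)$ to get equivalence of the $L^p$- and $L^q$-norms on $[f_k]$. Your explicit verification of the hypotheses and of the trivial implication $(b)\Rightarrow(a)$ just spells out what the paper leaves implicit.
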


\subsection{Subspaces of $L^2$ spanned by independent copies of a mean zero function $f\in L^2$.\\}
\label{main4}

So far we have considered subspaces of Orlicz spaces $L_M$,
lying "strictly to the left"\:of the space $L^2$, or more precisely, such that $1<\alpha_M^\infty\le \beta_M^\infty<2$. The following result shows that in the case when $M(t)=t^2$ (i.e., in $L^2$), the situation is much simpler: the unit ball {\it of any} subspace of $L^2$ spanned by mean zero identically distributed independent functions consists of functions with equicontinuous $L^2$-norms.

\begin{theor}\label{prop 5}
Let $\{f_k\}_{k=1}^\infty$ be a sequence of mean zero independent functions equimeasurable with some function $f\in L^2$. Then, the unit ball $B_H$ of the subspace $H:=[f_k]_{L^2}$ consists of functions having  equicontinuous norms in $L^2$.
\end{theor}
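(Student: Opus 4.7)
The plan is to locate an Orlicz function $N$ with $N(u)/u^2\to\infty$ as $u\to\infty$ and $N$ \emph{2-convex}, such that the subspace $H$ embeds boundedly into the Orlicz space $L_N$. Once this is done, the classical Vall\'{e}e Poussin criterion for uniform integrability, combined with the fact that equicontinuity of $L^2$-norms on $B_H$ is equivalent to uniform integrability of $\{|x|^2:\,x\in B_H\}$ in $L^1$, will close the argument.

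The production of $N$ is immediate from Lemma \ref{lemma 3} applied to $M(u)=u^2$: one has $M, \tilde M\in\Delta_2^\infty$, and $M$ is $2$-convex, so the lemma supplies an Orlicz function $N$ (up to equivalence) with $N,\tilde N\in\Delta_2^\infty$, $\lim_{u\to\infty}N(u)/u^2=\infty$, $\int_0^1 N(|f(t)|)\,dt<\infty$, and, crucially, $N$ is $2$-convex for large values of the argument.

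The heart of the matter is the continuous embedding $H\subset L_N$. By the Johnson--Schechtman machinery reviewed in \S\ref{aux1}, $[f_k]_{L_N}$ is isomorphic to the Orlicz sequence space $\ell_{\psi_N}$, where $\psi_N(u)=\int_0^1\theta_N(u|f(t)|)\,dt$ with $\theta_N(v)=v^2$ for $v\le 1$ and $\theta_N(v)=N(v)$ for $v\ge 1$. I aim to verify that $\psi_N(u)\preceq u^2$ for small $u$: split the integral at $\{|f|\le 1/u\}$ and $\{|f|>1/u\}$; the first part contributes at most $u^2\|f\|_{L^2}^2$, while on the second part the $2$-convexity of $N$ forces $v\mapsto N(v)/v^2$ to be essentially nondecreasing, so that for $u\le 1$ one has $N(u|f(t)|)\le u^2 N(|f(t)|)$, yielding the bound $u^2\int_0^1 N(|f|)\,dt$. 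This gives $\ell^2\subset\ell_{\psi_N}$; combined with the identity $\|\sum c_k f_k\|_{L^2}=\|f\|_{L^2}\|c\|_{\ell^2}$ obtained from mean-zero independence, it shows that $B_H$ is a bounded subset of $L_N$.

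Finally, since $N$ is $2$-convex, the function $G(t):=N(\sqrt t)$ is convex with $G(t)/t\to\infty$ as $t\to\infty$, and the boundedness of $B_H$ in $L_N$ is precisely the statement $\sup_{x\in B_H}\int_0^1 G(|x(t)|^2)\,dt<\infty$. Vall\'{e}e Poussin's classical criterion then yields uniform integrability of $\{|x|^2:\,x\in B_H\}$ in $L^1$, which is exactly equicontinuity of the $L^2$-norms on $B_H$. The main obstacle is the estimate $\psi_N(u)\preceq u^2$ of the third paragraph: everything hinges on being able to extract from Lemma \ref{lemma 3} an $N$ that is not merely of faster-than-quadratic growth but actually $2$-convex, which is ensured by the $2$-convexity of the ambient space $L^2$.
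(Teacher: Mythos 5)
Your proposal is correct and follows essentially the same route as the paper: produce $N$ via Lemma \ref{lemma 3} with $2$-convexity inherited from $M(u)=u^2$, show $B_H$ is bounded in $L_N$ using the identification of $[f_k]_{L_N}$ with an Orlicz sequence space from \S\,\ref{aux1}, and finish with the Vall\'{e}e Poussin criterion. The only (immaterial) difference is that you verify $\ell^2\subset\ell_{\psi_N}$ by estimating $\psi_N(u)\preceq u^2$ directly from formula \eqref{psi} and the monotonicity of $N(v)/v^2$, whereas the paper bounds the fundamental function $\phi_{\ell_\psi}(n)$ by $n^{1/2}$ via \eqref{Luxem2} and the norm of the dilation operator on the $2$-convex space $L_N$.
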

\begin{proof}
As usual, we assume that $f^*=f$.

By Lemma \ref{lemma 3}, we find a function $N$ equivalent to some $2$-convex Orlicz function such that $\tilde{N}\in\Delta_2^\infty$, $\lim_{u\to\infty }{N(u)}u^{-2}=\infty$ and $N(|f|)\in L^1$. Without loss of generality, we can assume that $N$ is itself a $2$-convex Orlicz function on $[0,\infty)$, and therefore the Orlicz space $L_N$ is $2$-convex (see \S\,\ref{prel2}). In addition, from the above relations it follows that $L_N\stackrel{\ne}{\subset} L^2$ and $f\in L_N.$

Let $[f_k]_{L_N}\approx \ell_\psi$ and let $\phi_{\ell_\psi}$ be the fundamental function of the space $\ell_\psi$. By virtue of \eqref{Luxem2} and the definition of the operator $\sigma_n$, for any $n\in\mathbb{N}$ we have
\begin{eqnarray*}
\phi_{\ell_\psi}(n)&\asymp& \|\sigma_nf\|_{L_N}+\|f(\cdot/n)\|_{L^2[1,\infty)}\\&=&
\|\sigma_n(f\chi_{[0,1/n]})\|_{L_N}+\|f\chi_{[1/n,1]}(\cdot/n)\|_{L^2[1,\infty)}\\&\le&
C'n^{1/2}(\|f\chi_{[0,1/n]}\|_{L_N}+\|f\chi_{[1/n,1]}\|_{L^2})\le Cn^{1/2}\|f\|_{L_N}.
\end{eqnarray*}

On the other hand, $\{f_k/\|f\|_{L^2}\}_{k=1}^\infty$ is an orthonormal sequence in $L^2$ and hence $[f_k] _{L^2}\approx \ell^2$. Since $\ell_\psi\subset \ell^2$ and $\phi_{\ell^2}(n)=n^{1/2}$, $n=1,2,\dots$, from the preceding relations it follows that $\phi_{\ell_\psi}(n)\asymp n^{1/2}$, i.e., $[f_k]_{L_N}\approx \ell^2$.
Thus, the ball $B_H$ is bounded in $L_N$, and applying the Vall\'{e}e Poussin criterion once again, we obtain the desired result.
\end{proof}

\begin{rem}
\label{rem: non-equi-int}
The following example shows that the result of Theorem \ref{prop 5} cannot be extended to all subspaces generated by mean zero independent (but. in general, not identically distributed) functions.

Let $\{f_k\}_{k=1}^\infty$ be a sequence of independent functions on $[0,1]$ such that $\int_0^1 f_k(t)\,dt=0,$
$|f_k(t)|=2^{k/2}$, $t\in E_k,$ where $m(E_k)=2^{-k-1},$ and $|f_k(t)|= 1$, $t\in [0,1]\setminus E_k$, $k=1,2,\dots.$ As it is shown in \cite[example~2]{A-14}, the subspace $[f_k] $ is strongly embedded in $L^2$, but there is no symmetric space $X$ such that both $X\stackrel{\ne}{\subset} L^2$ and $X\supset [f_k]$. Then, taking into account the Vall\'{e}e Poussin criterion, we conclude that the norms of functions of the unit ball of the subspace $[f_k]$ are not equicontinuous in $L^2$.
\end{rem}

\newpage

\end{document}